\documentclass{amsart}
\usepackage[active]{srcltx}
\usepackage{setspace,graphicx,graphics,amsmath,chngpage,amsthm}
\usepackage{amsmath, array,amsfonts,amssymb,newlfont,hhline}

\newcommand{\D}{\mathcal{D}}

\newcommand{\Real}{\mathbb R}
\newcommand{\abs}[1]{\left\vert#1\right\vert}

\newtheorem{lem}{Lemma}
\newtheorem{thm}{Theorem}

\newtheorem{rem}{Remark}

\numberwithin{thm}{section}
\numberwithin{lem}{section}
\numberwithin{coll}{section}
\numberwithin{rem}{section}
\numberwithin{exm}{section}
\numberwithin{prop}{section}
\numberwithin{equation}{section}


\vfuzz2pt 
\hfuzz2pt 

\setcounter{page}{1}

\setlength{\textheight}{22cm}

\setlength{\textwidth}{14cm}

\setlength{\oddsidemargin}{1cm}

\setlength{\evensidemargin}{1cm}

\pagestyle{myheadings}

\thispagestyle{empty}

\numberwithin{equation}{section}

\begin{document}

\centerline {\textsc {\large On the convergence of entropy for $K^{th}$ extreme}}

\vspace{0.5in}

\begin{center}
   Ali Saeb \footnote{Corresponding author: ali.saeb@gmail.com}\\
	Department of Economic Sciences,\\
  Indian Institute of Science Education and Research,
   Bhopal 462 066, India\\
   \end{center}

\vspace{1in}

\noindent {\bf Abstract:} Let recall that the term "$k^{th}$ extreme" was introduced in a limiting sense. That is, if $X_{r:n}$ denote the $r^{th}$ order statistic then for fix $k,$ as $n\to\infty$, $X_{n-k+1:n}$ is called the $k^{th}$ extremes or $k^{th}$ largest order statistics. 
In this paper, we study entropy limit theorems for $k^{th}$ largest order statistics under linear normalization. We show the necessary and sufficient conditions which convergence entropy of $k^{th}$ extreme holds.
\vspace{0.5in}

\vspace{0.2in} \noindent {\bf Keywords:} Entropy convergence, Max domains of attraction, $K^{th}$ Extreme.

	\vspace{0.5in}
\vspace{0.2in} \noindent {\bf MSC 2010 classification:} 60F10
\newpage
\section{Introduction}
Shannon (1948) defines entropy of an absolutely continuous random variable (rv) $X$ with distribution function (df) $F$ and probability density function (pdf) $f$ as
\[
h(f) = E(- \log f(X)) = -\int_{\Real}
f(x) \log f(x)dx
\]
with the convention that the integral is over all real values for which the density is positive. 
The definitions of entropy and some of their consequences are studied by Lazo and Rathie (1978), Ebrahimi et al. (1999), and Ravi and Saeb (2014b).

The limit laws of linearly normalized partial maxima $M_n=X_1\vee\ldots\vee X_n$ of iid rvs $X_1,X_2,\ldots,$ with common distribution function $F,$ namely,
\begin{equation}\label{Introduction_e1}
	\lim_{n\to\infty}P(M_n\leq a_nx+b_n)=\lim_{n\to\infty}F^n(a_nx+b_n)=G(x),\;\;x\in \mathcal{C}(G),
\end{equation}
where, $a_n>0,$ $b_n\in\Real,$ are norming constants, $G$ is a non-degenerate df, $\mathcal{C}(G)$ is the set of all continuity points of $G,$ are called max stable laws. If, for some $G,$ a df $F$ satisfies (\ref{Introduction_e1}) for some norming constants $a_n>0,$ $b_n\in\Real,$ then we say that $F$ belongs to the max domain of attraction of $G$ under linear normalization and denote it by $F\in \mathcal{D}(G).$ Limit dfs $G$ satisfying (\ref{Introduction_e1}) are the well known extreme value types of distributions, or max stable laws, namely,
	\begin{center}
	\begin{tabular}{c l}
		the Fr\'{e}chet law: & $\Phi_\alpha(x)=\left\lbrace	
							\begin{array}{l l}
							 0, &\;\;\; x< 0, \\
							 \exp(-x^{-\alpha}), &\;\;\; x\geq 0;\\
							 \end{array}
							 \right.$ \\
\\							
		the Weibull law: & $\Psi_\alpha(x)=\left\lbrace
						\begin{array}{l l} 0, & x<0, \\
						\exp(- \mid x \mid^{\alpha}), & 0\leq x;
						\end{array}\right.$ \\
\\
		and the Gumbel law: & $\Lambda(x)=\exp(-\exp(-x));\;\;\;\;\; x\in\Real;$\\
	\end{tabular}
\end{center}
$\alpha>0$ being a parameter, with respective densities
$\phi_\alpha(x)= 0,$ if $x< 0,$ and $=\alpha x^{-\alpha-1}e^{-x^{-\alpha}}$ if $x > 0$ is the Fr\'echet density; $\psi_\alpha(x)= 0,$ if $x>0,$ and $=\alpha (-x)^{\alpha-1}e^{-(-x)^{\alpha}},$ if $x<0$ is the Weibull density and  
 $\lambda(x)=e^{-x}e^{-e^{-x}},\;\; x\in\Real$ is the Gumbel density.
Note that (\ref{Introduction_e1}) is equivalent to
\begin{eqnarray}\label{Introduction_e1.2}
\lim_{n\to\infty}n(1-F(a_nx+b_n))=-\log G(x), \; x \in \{y: G(y) > 0\}.
\end{eqnarray}
Let $X_{r:n}$ denote the $r^{th}$ order statistic then for fix $k,$ as $n\to\infty$, $X_{n-k+1:n}$ is called the $k^{th}$  extremes. The distribution functions of $X_{n-k+1:n}$ take very simple forms in the case of iid rvs. It is also well known that if the df $F$ satisfies (\ref{Introduction_e1}) for some $G,$ then the $k^{th}$ largest of $\{X_1,\ldots,X_n\}$ 
converges with the same normalization to a nondegenerate df. More precisely, if (\ref{Introduction_e1}) holds for some norming constants $a_n$, $b_n$ and some dfs $F$ and $G$ then
\begin{eqnarray}\label{Introduction_e1.3}
\lim_{n\to\infty} P\left( X_{n-k+1:n} \leq a_n x + b_n \right) = G(x) \sum_{i=0}^{k-1} \frac{(- \log G(x))^i}{i!}, \;
\end{eqnarray}
for  $x \in \{y: G(y) > 0\},$ (see, Galambos 1987). Suppose that (\ref{Introduction_e1}) holds for some df $F$ and some max stable law $G.$ Let $f$ and $g$ respectively denote the pdfs of $F$ and $G.$ Let $g_n(x)=na_nf(a_nx+b_n)F^{n-1}(a_nx+b_n)$ is pdfs of the rv $\left( \dfrac{M_n-b_n}{a_n} \right),$ the pdf of $X_{n-k+1:n}$ is given by
\begin{eqnarray}\label{Introduction_e2}
g_n^{(k)}(x)&=& \frac{n!}{(k-1)!(n-k)!} a_n f(a_nx+b_n)F^{n-k}(a_nx+b_n)(\bar{F}^{k-1}(a_nx+b_n))\nonumber\\
&=& \frac{F^{n-k}(a_nx+b_n)} {n^{k-1}B(n,k)}a_nf(a_nx+b_n)(n\bar{F}(a_nx+b_n))^{k-1},\;\;x\in\Real,\;\;n\geq 1.
\end{eqnarray}
where, $\bar{F}(a_nx+b_n)=1-F(a_nx+b_n)$ and $B(n,k)=Beta(n-k+1,k)=\frac{(k-1)!(n-k)!}{n!}.$ The limit law follows one of the laws of the Fr\'echet, the Weibull, and the Gumbel studied by Hall (1978).

 The entropy convergence of domain attraction is an interesting subject for study in between application investigations. Linnik (1959) and Shimizu (1975) suggested applying the Shannon entropy convergence for investigating the central limit theorem. Brown (1982), Barron (1986), and Takano (1987) were the first to demonstrate a central limit theorem with convergence in the Shannon entropy sense. Cardone et. al. (2022) study the entropy of central limit theorem for order statistics. 
With similar talks in the stable laws and central limit theorems the subject of max stable laws and max domain attraction is established.  Ravi and Saeb (2012) derive entropies of $\ell$-max and p-max families and also of associated
generalized Pareto, generalized log-Pareto and related distributions.  Ravi and Saeb (2014a) study the necessary condition for the Shannon
entropy convergence of the $k^{th}$ extremes. They show that if $f (x) > 0$ is a nonincreasing pdf for $x$ close to $r(F),$ then the
convergence of entropy of $k^{th}$ max rv exists. Saeb(2023b) find the sufficient condition entropy convergence for max stable laws and max domain attraction. He present that if 
$E(-\log f(X))$ is exist then the entropy convergence holds. The properties of the R\'enyi entropy of max domain attraction are studied by Saeb (2018, 2023a).

In this article, we first derive the entropy of $k^{th}$ extreme for the max stable laws. Then we prove the sufficient condition for entropy convergence holds in this case. We should note that, our results satisfy for $k\geq 2.$ In the next section we give our some preliminary and main results, followed by a section on Proofs. 
Next we give an appendix containing the results used in this article.
 
We shall denote the left extremity of df $F$ by $l(F) = \inf\{x: F(x) > 0\} \geq - \infty $ and the right extremity of $F$ by $r(F) = \sup \{x: F(x) < 1\} \leq \infty.$ $\frac{\partial}{\partial x}f(x)$ is partial derivative of $f$ with respect to variable of $x.$ We will refer to Shannon's entropy as entropy in this article.

\section{Main Results}

From definition of entropy and  (\ref{Introduction_e2})  we write
\begin{eqnarray}
	h(g_n^{(k)})&=& -\int_A g_n^{(k)}(x)\log (g_n^{(k)}(x))\;dx, \;\;\mbox{where}\;\;A=\{x\in\Real:g_n^{(k)}(x)>0\}\nonumber\\
 & = &  -\int_A  g_n^{(k)}(x) \left[\log (F^{n-k}(a_nx+b_n)\bar{F}^{k-1}(a_nx+b_n))-\log( B(n,k))\right] dx \nonumber\\
 && - \int_A g_n^{(k)}(x) \log (a_nf(a_nx+b_n))dx\nonumber\\
 & = & - (I_1(n) + I_2(n)), \; \mbox{say. }\label{e0}
 \end{eqnarray}
We then have
\begin{eqnarray}
\lim_{n \rightarrow \infty} h(g_n^{(k)}) = \log \Gamma(k)+k+(k-1)\left(\gamma-\sum_{i=1}^{k-1}\frac{1}{i}\right) - \lim_{n\rightarrow \infty} I_2(n) \;  \label{1.5}
\end{eqnarray}
for $k\geq 2$, in view of the following lemma.

\begin{lem}\label{Lemma1}
\begin{eqnarray}
\lim_{n\to\infty}I_1(n) =-\log \Gamma(k)-k-(k-1)\left(\gamma-\sum_{i=1}^{k-1}\frac{1}{i}\right).\nonumber
\end{eqnarray}
where, $\gamma$ is the Euler's constant.
\end{lem}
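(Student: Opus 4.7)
The plan is to reduce $I_1(n)$ to a closed-form arithmetic expression via the probability integral transform and the classical digamma identities for beta distributions, then perform a careful asymptotic analysis as $n\to\infty$.

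First, I would substitute $t = F(a_n x + b_n)$ in the defining integral; since $dt = a_n f(a_n x + b_n)\,dx$, one factor of $a_n f$ inside $g_n^{(k)}(x)$ is absorbed into $dt$, and the remaining weight reduces to the $\mathrm{Beta}(n-k+1,k)$ density $t^{n-k}(1-t)^{k-1}/B(n,k)$ on $(0,1)$. This gives
\[
I_1(n) = \int_0^1 \frac{t^{n-k}(1-t)^{k-1}}{B(n,k)}\Bigl[(n-k)\log t + (k-1)\log(1-t) - \log B(n,k)\Bigr]\,dt,
\]
i.e.\ $I_1(n) = (n-k)\,E[\log T] + (k-1)\,E[\log(1-T)] - \log B(n,k)$ with $T\sim\mathrm{Beta}(n-k+1,k)$. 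The original $F$ drops out entirely, which is consistent with the $F$-free limit claimed.

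Next, I would invoke the classical logarithmic beta-moment identities $E[\log T] = \psi(\alpha)-\psi(\alpha+\beta)$ and $E[\log(1-T)] = \psi(\beta)-\psi(\alpha+\beta)$, where $\psi$ is the digamma function, converting $I_1(n)$ into the purely arithmetic expression
\[
I_1(n) = (n-k)\bigl[\psi(n-k+1)-\psi(n+1)\bigr] + (k-1)\bigl[\psi(k)-\psi(n+1)\bigr] - \log B(n,k).
\]
The integer-argument identity $\psi(m+1) = -\gamma + \sum_{i=1}^m 1/i$ is where both Euler's constant $\gamma$ and the finite harmonic sum $\sum_{i=1}^{k-1}1/i$ in the target expression will enter.

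The asymptotic analysis then splits into three pieces: (i) $(n-k)[\psi(n-k+1)-\psi(n+1)] = -(n-k)\sum_{j=n-k+1}^n 1/j \to -k$, since each of the $k$ summands tends to $1$; (ii) $(k-1)[\psi(k)-\psi(n+1)]$ is expanded using $H_n = \log n + \gamma + O(1/n)$ to extract the constant contribution $(k-1)(H_{k-1}-\gamma)$; and (iii) $-\log B(n,k) = -\log\Gamma(k) + \sum_{j=0}^{k-1}\log(n-j)$, in which the logarithmic-in-$n$ growth of the sum must be tracked precisely. The main obstacle is the careful bookkeeping of the logarithmic-in-$n$ contributions from $\psi(n+1)$ and from $\sum_{j=0}^{k-1}\log(n-j)$: one must verify how these pieces interact so that the remaining constants $-\log\Gamma(k)$, $-k$, $\gamma$ and $\sum_{i=1}^{k-1}1/i$ assemble into the claimed identity. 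The restriction $k\geq 2$ is essential since for $k=1$ the coefficient $(k-1)$ of $\psi(k)-\psi(n+1)$ vanishes, and the balance of logarithmic growth must then be handled separately.
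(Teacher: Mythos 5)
Your reduction is exact and is, in substance, the same route the paper takes: the paper also substitutes $t=F(a_nx+b_n)$ and arrives at the same digamma terms, not via the tabulated identities $E[\log T]=\psi(\alpha)-\psi(\alpha+\beta)$ but by differentiating $\int_0^1 t^{n-k}(1-t)^{k-1}dt$ with respect to the parameters $n-k$ and $k-1$ --- the same identity in disguise. The gap is that your proposal stops precisely at the point you yourself flag as ``the main obstacle'': you never carry out the bookkeeping of the logarithmic terms, and that bookkeeping fails. Summing your three pieces gives $(n-k)\bigl[\psi(n-k+1)-\psi(n+1)\bigr]\to -k$, then $(k-1)\bigl[\psi(k)-\psi(n+1)\bigr]=(k-1)\bigl(\sum_{i=1}^{k-1}\tfrac1i-\gamma\bigr)-(k-1)\log n+o(1)$, and $-\log B(n,k)=-\log\Gamma(k)+k\log n+o(1)$; the $\log n$ contributions carry coefficients $-(k-1)$ and $+k$, leaving an uncancelled $+\log n$. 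Hence
\[
I_1(n)=-\log\Gamma(k)-k-(k-1)\Bigl(\gamma-\sum_{i=1}^{k-1}\frac1i\Bigr)+\log n+o(1),
\]
so with $I_1$ as defined in (\ref{e0}) the limit does not exist, and no completion of your argument can establish the lemma as literally stated. A concrete check at $k=2$: there $I_1(n)=-(n-2)\bigl(\frac{1}{n-1}+\frac{1}{n}\bigr)+1-\sum_{j=1}^{n}\frac1j+\log n+\log(n-1)$, which diverges like $\log(n-1)$, while the claimed limit is $-1-\gamma$.

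This is not a defect of your reduction but of the statement: the paper's own proof reaches the finite constant only through an algebraic slip at the corresponding step, where $\frac{(n-k+1)\cdots n}{(k-1)!\,n^{k-1}}$ is rewritten as $\frac{1}{(k-1)!}\prod_{i=1}^{k}\bigl(1-\frac{i-1}{n}\bigr)$; the latter equals $\frac{(n-k+1)\cdots n}{(k-1)!\,n^{k}}$, so a factor $n$ --- worth exactly the $\log n$ you would be left with --- is silently discarded. (The same off-by-one recurs later in the paper, e.g.\ the assertion $\lim_{n\to\infty}\log(n^{k-1}B(n,k))=\log\Gamma(k)$, whereas in fact $n^{k}B(n,k)\to\Gamma(k)$ and $n^{k-1}B(n,k)\to 0$.) The constant in the lemma is really $\lim_{n\to\infty}\bigl(I_1(n)-\log n\bigr)$; equivalently the lemma becomes true if the factor $n$ is moved into the other term of the decomposition, i.e.\ if one sets $I_2(n)=\int_A g_n^{(k)}(x)\log\bigl(na_nf(a_nx+b_n)\bigr)dx$, whose divergence $-\log n$ is what cancels the residue, so that (\ref{1.5}) can still hold for the sum. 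The honest completion of your plan is therefore a refutation of the displayed statement together with this repaired decomposition; you should assert that outcome rather than hope the constants ``assemble into the claimed identity.''
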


Now we need the following lemmata to prove the result on entropy convergence for $k^{th}$ extremes, the first of which gives local uniform convergence for the pdf of $k^{th}$ extreme. 
\begin{lem}\label{Lemma.5} 
Suppose $F$ be absolutely continuous with nonincreasing pdf $f$. If $F\in\D(G)$ for some nondegenerate df $G,$ with norming constants $a_n$ and $b_n$ so that (\ref{Introduction_e1}) holds then the pdf $g_n^{(k)}(x)$ in (\ref{Introduction_e2}) locally uniformly converges to the pdf of $g^{(k)}(x)=g(x)\dfrac{(-\log G(x))^{k-1}}{(k-1)!}.$
\end{lem}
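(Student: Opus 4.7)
The plan is to decompose $g_n^{(k)}$ into factors that individually admit locally uniform limits, then take the product. Rearranging (\ref{Introduction_e2}) using $B(n,k)=(k-1)!(n-k)!/n!$ gives
\begin{equation*}
g_n^{(k)}(x) = \frac{1}{(k-1)!}\prod_{i=1}^{k-1}\!\left(1-\tfrac{i}{n}\right)\cdot [n a_n f(a_nx+b_n)]\cdot F^{n-k}(a_nx+b_n)\cdot [n\bar{F}(a_nx+b_n)]^{k-1}.
\end{equation*}
The combinatorial prefactor tends to $1/(k-1)!$ trivially, so it will suffice to establish local uniform convergence of each of the three $x$-dependent factors on compacts of $\{x:G(x)>0\}$.

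Two of these are straightforward. For the second factor, (\ref{Introduction_e1}) gives $F^n(a_nx+b_n)\to G(x)$ pointwise; since the sequence is monotone in $x$ and the limit $G$ is continuous on $\{G>0\}$, the convergence is automatically uniform on compacts, and combined with $F(a_nx+b_n)\to 1$ one obtains $F^{n-k}(a_nx+b_n)\to G(x)$ locally uniformly. For the third factor, the same monotonicity-plus-continuity argument applied to (\ref{Introduction_e1.2}) yields $n\bar{F}(a_nx+b_n)\to -\log G(x)$ locally uniformly, and hence $[n\bar{F}(a_nx+b_n)]^{k-1}\to(-\log G(x))^{k-1}$ locally uniformly.

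The crucial factor is $n a_nf(a_nx+b_n)$, and this is where the hypothesis that $f$ is nonincreasing enters. Since $n a_nf(a_nt+b_n)=-\tfrac{d}{dt}[n\bar{F}(a_nt+b_n)]$, for $a<b$ in $\{G>0\}$ we have
\begin{equation*}
\int_a^b n a_nf(a_nt+b_n)\,dt = n\bar{F}(a_na+b_n)-n\bar{F}(a_nb+b_n)\longrightarrow\log\!\frac{G(b)}{G(a)}=\int_a^b\frac{g(t)}{G(t)}\,dt.
\end{equation*}
Because $a_n>0$ and $f$ is nonincreasing near $r(F)$, the integrand $t\mapsto n a_nf(a_nt+b_n)$ is itself nonincreasing in $t$ for all large $n$. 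A standard monotone-density argument then promotes the convergence of integrals to pointwise convergence $n a_nf(a_nx+b_n)\to g(x)/G(x)$ at continuity points, and monotonicity together with continuity of the limit upgrades this to local uniform convergence.

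Finally, multiplying the four locally uniform limits, each of which is locally bounded on the compact under consideration, yields
\begin{equation*}
\frac{1}{(k-1)!}\cdot\frac{g(x)}{G(x)}\cdot G(x)\cdot(-\log G(x))^{k-1} = g(x)\frac{(-\log G(x))^{k-1}}{(k-1)!} = g^{(k)}(x),
\end{equation*}
which completes the argument. The main obstacle is the density step for the first factor: the convergence $F^n\to G$ does not by itself imply convergence of densities, and the nonincreasing hypothesis on $f$ is precisely what permits converting the integrated convergence of $n\bar{F}$ into pointwise, and then locally uniform, convergence of $n a_nf(a_n\cdot+b_n)$.
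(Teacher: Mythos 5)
Your proof is correct, and it takes a genuinely different route from the paper's. The paper's own proof is a two-line reduction: it rewrites $g_n^{(k)}(x)$ in terms of the density $g_n(x)=na_nf(a_nx+b_n)F^{n-1}(a_nx+b_n)$ of the normalized maximum, and then delegates all the analytic work to the cited Theorem \ref{gn_conv} (Resnick's Theorem 2.5), which gives $g_n\to g$ locally uniformly, combined with (\ref{Introduction_e1.2}) for the factor $(n\bar{F}(a_nx+b_n))^{k-1}$; there the nonincreasing hypothesis on $f$ enters only indirectly, via Theorem \ref{thm_von}, which guarantees the von Mises conditions under which Theorem \ref{gn_conv} applies. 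You instead isolate the factor $na_nf(a_nx+b_n)$ and re-derive its convergence to $g(x)/G(x)$ from scratch: the integral identity $\int_a^b na_nf(a_nt+b_n)\,dt=n\bar{F}(a_na+b_n)-n\bar{F}(a_nb+b_n)\to\log(G(b)/G(a))$, monotonicity of $t\mapsto na_nf(a_nt+b_n)$ (this is where the nonincreasing hypothesis enters directly), the squeeze $\frac{1}{\delta}\int_{x}^{x+\delta}\leq h_n(x)\leq\frac{1}{\delta}\int_{x-\delta}^{x}$ giving pointwise convergence at continuity points, and a P\'olya-type upgrade to local uniform convergence; your algebraic decomposition with the prefactor $\frac{1}{(k-1)!}\prod_{i=1}^{k-1}(1-i/n)$ checks out and avoids the paper's division by $F^{k-1}(a_nx+b_n)$. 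What your approach buys: it is self-contained (in effect re-proving the core of Resnick's Theorem 2.5 by the classical monotone density argument) and it treats $\Phi_\alpha$, $\Psi_\alpha$, $\Lambda$ uniformly, with no case-by-case appeal to the conditions (\ref{von_F}), (\ref{von_W}), (\ref{lem3_appendix}). What the paper's route buys is brevity, at the cost of leaning on the literature. One small point of precision for your write-up: the conclusion holds locally uniformly on the \emph{interior} of $\{y:G(y)>0\}$ at continuity points of $g/G$ --- e.g.\ on $(0,\infty)$ for $\Phi_\alpha$ and on $(-\infty,0)$ for $\Psi_\alpha$, since $g/G$ can jump or blow up at $0$ when $\alpha\leq 1$ --- which is exactly the scope of the cited Theorem \ref{gn_conv}, so your claim should be stated on those intervals rather than on all of $\{G>0\}$.
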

\begin{rem} The density function of $g^{(k)}$ in Lemma \ref{Lemma.5} when $G$ is
\begin{eqnarray}
\text{Fr\'echet: }\phi_\alpha^{(k)}(x)&=&
\begin{cases}
    0 & \text{ if } x\leq 0,\\
    \dfrac{\alpha}{(k-1)!} x^{-\alpha k-1}e^{-x^{-\alpha}}  & \text{ if } x > 0;
\end{cases} \nonumber \\  
\text{ Weibull: }\psi_\alpha^{(k)}(x)&=&
\begin{cases}
\dfrac{\alpha}{(k-1)!} (-x)^{\alpha k-1}e^{-(-x)^{\alpha}}, & \text{ if } x<0,\\
   0  & \text{ if }x \geq 0;
\end{cases} \nonumber \\  
\text{Gumbel: }\lambda^{(k)}(x)&=&\dfrac{1}{(k-1)!} e^{-kx}e^{-e^{-x}},\;\; x\in\Real.\nonumber   
\end{eqnarray}
 \end{rem}
\begin{lem}\label{TPhi_orderEnt}
The entropy of $g^{(k)}$ in (\ref{Introduction_e1.3}) when $G$ and $k\geq 2$ is
\begin{itemize}\label{ent_order}
\item[(i)] Fr\'{e}chet law is $h(\phi_{\alpha}^{(k)})=-\log\dfrac{\alpha}{(k-1)!}-\dfrac{\alpha k+1}{\alpha}\left(-\gamma+\sum_{i=1}^{k-1}\dfrac{1}{i}\right)+k;$
\item[(ii)] Weibull law is
		$h(\psi_\alpha^{(k)})=-\log\dfrac{\alpha}{(k-1)!}-\dfrac{\alpha k-1}{\alpha}\left(-\gamma+\sum_{i=1}^{k-1}\dfrac{1}{i}\right)+k;$
\item[(iii)] Gumbel law is
		$h(\lambda^{(k)})=\log(k-1)!-k\left(-\gamma+\sum_{i=1}^{k-1}\dfrac{1}{i}\right)+k.$
\end{itemize}
	\end{lem}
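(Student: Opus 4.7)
The plan is to compute each of the three entropies by direct integration, exploiting the following unifying observation: in every case a simple monotone change of variables converts a random variable with density $g^{(k)}$ into one with the Gamma$(k,1)$ density $u^{k-1}e^{-u}/(k-1)!$. This reduces everything to the two elementary moments $E[U]=k$ and $E[\log U]=\psi(k)=-\gamma+\sum_{i=1}^{k-1}\frac{1}{i}$ (the digamma identity at positive integers).

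First I would treat the Fr\'echet case. Writing $X\sim\phi_\alpha^{(k)}$, one has
\[
-\log\phi_\alpha^{(k)}(x)=-\log\frac{\alpha}{(k-1)!}+(\alpha k+1)\log x+x^{-\alpha},
\]
so $h(\phi_\alpha^{(k)})=-\log\frac{\alpha}{(k-1)!}+(\alpha k+1)E[\log X]+E[X^{-\alpha}]$. Substituting $u=x^{-\alpha}$ turns the density into $\frac{1}{(k-1)!}u^{k-1}e^{-u}$, so $U=X^{-\alpha}\sim\Gamma(k,1)$. Then $E[X^{-\alpha}]=E[U]=k$ and $E[\log X]=-\frac{1}{\alpha}E[\log U]=-\frac{1}{\alpha}\left(-\gamma+\sum_{i=1}^{k-1}\frac{1}{i}\right)$, and collecting terms yields (i).

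For the Weibull case the plan is identical with $U=(-X)^{\alpha}$, which again gives $U\sim\Gamma(k,1)$ and hence $E[(-X)^{\alpha}]=k$ and $E[\log(-X)]=\frac{1}{\alpha}(-\gamma+\sum_{i=1}^{k-1}\frac{1}{i})$; plugging into
\[
h(\psi_\alpha^{(k)})=-\log\frac{\alpha}{(k-1)!}-(\alpha k-1)E[\log(-X)]+E[(-X)^{\alpha}]
\]
produces (ii). For the Gumbel case I would take $U=e^{-X}$; the Jacobian $|dx|=du/u$ together with $\lambda^{(k)}(x)=\frac{1}{(k-1)!}u^{k}e^{-u}$ again gives $U\sim\Gamma(k,1)$. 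Since
\[
-\log\lambda^{(k)}(x)=\log(k-1)!+kx+e^{-x},
\]
we get $h(\lambda^{(k)})=\log(k-1)!+kE[X]+E[e^{-X}]$ with $E[e^{-X}]=E[U]=k$ and $E[X]=-E[\log U]=\gamma-\sum_{i=1}^{k-1}\frac{1}{i}$, giving (iii).

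There is no real obstacle: the only nontrivial ingredient is the closed form $\psi(k)=-\gamma+\sum_{i=1}^{k-1}\frac{1}{i}$ for the digamma at positive integers, which I would either invoke directly or derive from the recursion $\psi(k+1)=\psi(k)+1/k$ together with $\psi(1)=-\gamma$. The mild bookkeeping point to watch is sign and limit flipping when changing variables across the support, especially for Weibull (where $x<0$) and for Fr\'echet (where the map $x\mapsto x^{-\alpha}$ reverses orientation); once the change-of-variable formula is applied in absolute-value form the computations are routine.
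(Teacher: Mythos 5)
Your proposal is correct and follows essentially the same route as the paper: the paper uses the identical substitutions ($u=x^{-\alpha}$, $u=(-x)^{\alpha}$, $u=e^{-x}$) to reduce each entropy to Gamma$(k,1)$ integrals, with $\int_0^\infty u^{k}e^{-u}\,du/(k-1)!=k$ and $\int_0^\infty u^{k-1}e^{-u}\log u\,du=(k-1)!\left(-\gamma+\sum_{i=1}^{k-1}\frac{1}{i}\right)$, the latter being your digamma identity $\psi(k)$, which the paper proves by the same recursion-plus-$\psi(1)=-\gamma$ induction in its appendix lemma. Your probabilistic phrasing via $E[U]$ and $E[\log U]$ is merely a cosmetic repackaging of the paper's direct integral computation.
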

\begin{thm} \label{TPhi_order}
Let $F\in\D(G)$ for some nondegenerate df $G,$ with norming constants $a_n$ and $b_n$ so that (\ref{Introduction_e1.3}) 
holds. The df $F$ be absolutely continuous with nonincreasing pdf $f$ which is eventually positive, that is, $\;f(x) > 0\;$ for $x$ close to $r(F).$ If $f$ is nonincreasing and enropy of $f$ exists for some $k\geq 2$ and 
\begin{enumerate}
\item[(i)]	$G=\Phi_\alpha$ for some $\;\alpha > 0\;$ then $\;\lim_{n \rightarrow \infty} h(g_n^{(k)}) = h(\phi_\alpha^{(k)});$
\item[(ii)]	$G=\Psi_\alpha$ for some $\;\alpha > 0\;$  then $\;\lim_{n \rightarrow \infty} h(g_n^{(k)}) = h(\psi_\alpha^{(k)});$
\item[(iii)] $G=\Lambda,$ then $\;\lim_{n \rightarrow \infty} h(g_n^{(k)}) = h(\lambda^{(k)}).$
\end{enumerate}
\end{thm}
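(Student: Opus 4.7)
The plan is to build on equation~\eqref{1.5}, which, after invoking Lemma~\ref{Lemma1}, has already reduced the theorem to identifying $\lim_{n\to\infty} I_2(n)$ in each of the three cases. Matching this target against the closed-form values in Lemma~\ref{TPhi_orderEnt}, it is enough to prove
\[
\lim_{n\to\infty} I_2(n) \;=\; \int_{\{G>0\}} g^{(k)}(x)\,\log\!\frac{g(x)}{G(x)}\,dx.
\]

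First, I would identify the pointwise limit of the integrand. Since $g_n(x)=n a_n f(a_n x+b_n)F^{n-1}(a_n x+b_n)\to g(x)$ and $F^{n-1}(a_n x+b_n)\to G(x)$ on $\{G>0\}$, one obtains $n a_n f(a_n x+b_n)\to g(x)/G(x)$. Writing $\log(a_n f)=\log(n a_n f)-\log n$ and absorbing the divergent $-\log n$ into the book-keeping already performed by Lemma~\ref{Lemma1}, the integrand of $I_2$ converges pointwise to $g^{(k)}(x)\log(g(x)/G(x))$, and Lemma~\ref{Lemma.5} upgrades this to local uniform convergence on compacts of $\{G>0\}$.

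The second, genuinely demanding, step is to upgrade this to convergence of the integral by dominated convergence. Here the hypotheses do the real work: the eventual monotonicity of $f$ supplies a pointwise majorization of $f(a_n x+b_n)$ (for instance, $f(y)\le \bar F(y)/(y-y_0)$ for $y$ close to $r(F)$) which, in each of the three standard normings, gives an integrable majorant for $g_n^{(k)}\,|\log(n a_n f)|$; the assumption that the entropy of $f$ exists tames the contribution of $\log f$ on the unbounded tail. Passing from $k=1$ (the maximum case handled by Saeb~(2023b)) to $k\ge 2$ only introduces the extra factor $W_n(x):=g_n^{(k)}(x)/g_n(x)\to (-\log G(x))^{k-1}/(k-1)!$, which is polynomially bounded in $-\log G(X)\sim \mathrm{Gamma}(k,1)$ under $g^{(k)}$; since this variable has all positive moments finite, the extra factor does not break integrability and Saeb's argument adapts with only cosmetic changes. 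This is the main obstacle I anticipate: $a_n f(a_n x+b_n)\to 0$ drives $\log(a_n f)$ to $-\infty$, so controlling the tail contributions uniformly in $n$ genuinely requires both the monotonicity of $f$ and the finiteness of $h(f)$.

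Finally, once the interchange is justified, the three cases reduce to explicit moment computations close to those in the proof of Lemma~\ref{TPhi_orderEnt}. For $G=\Phi_\alpha$, $g/G=\alpha x^{-\alpha-1}$ and the substitution $u=x^{-\alpha}$ turns the integral into one against the $\mathrm{Gamma}(k,1)$ density, producing $\log\alpha+\frac{\alpha+1}{\alpha}\psi(k)$ with $\psi(k)=-\gamma+\sum_{i=1}^{k-1}1/i$; substitution into \eqref{1.5} then recovers $h(\phi_\alpha^{(k)})$. For $G=\Psi_\alpha$ the symmetric substitution $u=(-x)^\alpha$ delivers the analogous expression and hence $h(\psi_\alpha^{(k)})$, while for $G=\Lambda$ the identity $g/G=e^{-x}$ combined with $-\log G(X)\sim\mathrm{Gamma}(k,1)$ yields $-\psi(k)$ and hence $h(\lambda^{(k)})$, completing the three claimed equalities.
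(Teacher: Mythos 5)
Your skeleton coincides with the paper's: reduce via (\ref{1.5}) and Lemma \ref{Lemma1} to the limit of $I_2(n)$, identify the pointwise limit $na_nf(a_nx+b_n)\to g(x)/G(x)$, invoke the local uniform convergence of Lemma \ref{Lemma.5} on compacts of $\{G>0\}$, and close with Gamma-moment computations (your three closed forms are correct and agree with what the paper's treatment of the middle piece $I_B$ produces). The genuine gap is that you compress the rest into ``dominated convergence with one integrable majorant; Saeb (2023b) adapts,'' and that is exactly where the paper's proof does its real work, in ways a single DCT cannot reproduce. No fixed integrable majorant dominates $g_n^{(k)}(x)\abs{\log(na_nf(a_nx+b_n))}$ on all of $\{G>0\}$: the regular-variation and von Mises bounds ((\ref{von_F}), Theorem \ref{rv}, Theorem \ref{lem4_appendix}) hold only for $a_nx+b_n$ beyond a fixed threshold, so the moving region where $F(a_nx+b_n)$ is small escapes every such bound. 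The paper isolates that region by cutting at the critical point $t_n=(\xi_n-b_n)/a_n$ with $-\log F(\xi_n)\simeq n^{-1/2}$, and controls the piece $I_{A_1}(n)$ not by domination but by the direct sandwich $-\frac{F^{n-k}(\xi_n)}{B(n,k)}(\log a_n-h(f))<I_{A_1}(n)<0$, where $F^{n-k}(\xi_n)\approx e^{-\sqrt{n}}$ beats the polynomial growth of $1/B(n,k)$ and of $\log a_n$ (Karamata, (\ref{Fa_n})--(\ref{Fa_n2})); this is the one place $h(f)<\infty$ is actually used, and it has no counterpart in your plan. The upper tail is likewise not a DCT region: the paper sandwiches $I_C$ using monotonicity of $f$ ($\epsilon<f(a_nx+b_n)/f(a_n)\le 1$) and $L(n)=\frac{a_nf(a_n)}{\bar F(a_n)}\,n\bar F(a_n)\to\alpha$ (resp.\ $1$), with the factor $1-F^n(a_nv+b_n)\to 1-G(v)\to 0$ as $v\to\infty$. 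Moreover, the one concrete majorization you offer is false: monotonicity yields $f(y)\le\bar F(y_0)/(y-y_0)$, not $f(y)\le\bar F(y)/(y-y_0)$ (the uniform density near its right endpoint is a counterexample); what the argument needs is the von Mises-type bound $a_nf(a_nx)\le(\alpha+\epsilon)(-\log F(a_nx))/x$, which is a theorem (Theorem \ref{thm_von}, part (i-b)) requiring both monotonicity and $F\in\D(\Phi_\alpha)$, not a pointwise triviality.

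Case (ii) is also not the ``symmetric substitution'' you describe. The paper does not run the direct argument for $\Psi_\alpha$ at all: with $b_n=r(F)$ and $a_n\to 0$ the tail machinery above is unavailable, so it transfers to $Z_i=1/(r(F)-X_i)$, whose df lies in $\D(\Phi_\alpha)$, and the change of variables injects an extra term $2E\bigl(\log(\vee_{i=1}^{n-k+1}Z_ia_n)\bigr)$ into the entropy; this term must itself be shown to converge --- via $Y_i=\alpha\log Z_i$ with $F_Y\in\D(\Lambda)$, a Jensen bound, and the moment convergence theorem (Theorem \ref{Moment}) --- contributing exactly $\frac{2}{\alpha}\left(-\gamma+\sum_{i=1}^{k-1}\frac{1}{i}\right)$, which is precisely the difference between $h(\phi_\alpha^{(k)})$ and $h(\psi_\alpha^{(k)})$. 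Your plan supplies no substitute for any of this, so case (ii) is unproved as written. One bookkeeping point to make explicit if you rewrite: $\lim_n I_2(n)$ is literally $-\infty$; what converges to your integral is $I_2(n)+\log n$, the compensating $+\log n$ sitting inside $I_1(n)$ (the paper itself is loose here --- it is $n^{k}B(n,k)\to\Gamma(k)$, not $n^{k-1}B(n,k)$, and its Lemma \ref{Lemma1} computation silently drops the matching $\log n$) --- so ``absorbing $-\log n$ into Lemma \ref{Lemma1}'' must be carried out, not asserted. Your observation that passing from $k=1$ to $k\ge 2$ only multiplies by $(n\bar F)^{k-1}/(n^{k-1}F^{k-1})\to(-\log G)^{k-1}$, with all Gamma moments finite, is correct and matches the paper's use of Lemma \ref{Lemma.5}, but it does not repair the missing tail estimates.
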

\section{Proofs}
\begin{proof}[\textbf{Proof of Lemma \ref{Lemma1}}] Making the change of variable $F(a_n x + b_n) = t,\;$ we get $\;a_n f(a_n x + b_n) dx = dt\;$ and from (\ref{e0})
\begin{eqnarray}
I_1(n)  &=& \log(n^{k-1})-\log(n^{k-1} B(n,k))\nonumber\\
&&+\frac{1}{B(n,k)}\int_{0}^{1}  t^{n-k}(1-t)^{k-1} \left[\log t^{n-k}+\log (1-t)^{k-1})\right] \,dt\nonumber\\
&=& \log\left(\dfrac{n!}{(k-1)!(n-k)!n^{k-1}}\right)+(k-1)\log(n) \nonumber\\
&&+\frac{1}{B(n,k)}\left[\int_{0}^{1}  \frac{\partial t^{n-k}}{\partial(n-k)}(n-k)(1-t)^{k-1}\,dt
+\int_{0}^{1}  \frac{\partial (1-t)^{k-1}}{\partial(k-1)}(k-1)t^{n-k}\,dt\right]\nonumber\\
&=& \log\left(\dfrac{(n-k+1)\ldots n}{(k-1)!n^{k-1}}\right)+\frac{1}{B(n,k)}\Big[ (n-k)\frac{\partial}{\partial(n-k)}\left(\int_{0}^{1}t^{n-k} (1-t)^{k-1}\,dt\right)\nonumber\\
&&+(k-1)\frac{\partial}{\partial(k-1)}\left(\int_{0}^{1}t^{n-k} (1-t)^{k-1}\,dt\right)\Big]+(k-1)\log(n)\nonumber\\
&=& \log\left(\dfrac{1}{(k-1)!}\left(1\ldots \frac{n-k+1}{n}\right)\right)+(n-k)\frac{\partial}{\partial(n-k)}\log B(n,k)\nonumber\\
&&+(k-1)\left(\frac{\partial}{\partial(k-1)}\log B(n,k)+\log(n)\right)\nonumber
\end{eqnarray}
we know that $B(n,k)=Beta(n-k+1,k)=\frac{\Gamma(n-k+1)\Gamma(k)}{\Gamma(n+1)}$ then
\begin{eqnarray}
&=&\log\left(\dfrac{1}{(k-1)!}\Pi_{i=1}^{k}\left(1-\frac{i-1}{n}\right)\right)
+(n-k)\left(\frac{\partial \log(\Gamma(n-k+1))}{\partial(n-k)}-\frac{\partial \log(\Gamma(n+1))}{\partial(n-k)}\right)\nonumber\\
&&+(k-1)\left(\frac{\partial \log(\Gamma(k))}{\partial(k-1)}-\frac{\partial \log(\Gamma(n+1))}{\partial(k-1)}+\log(n)\right)\nonumber
\end{eqnarray}
from the fact that $\Gamma(\alpha+\beta)=\Pi_{i=1}^{\beta-1}(\alpha+i)\Gamma(\alpha+1)$ after some calculation we have
\begin{eqnarray}
&=&\log\left(\frac{1}{\Gamma(k)}\Pi_{i=1}^{k}\left(1-\frac{i-1}{n}\right)\right)-\sum_{i=1}^{k}\frac{n-k}{n-k+i}-(k-1)\left(\sum_{i=1}^{n-k+1}\frac{1}{k-1+i}-\log(n)\right)\nonumber\\
&=&\log\left(\frac{1}{\Gamma(k)}\Pi_{i=1}^{k}\left(1-\frac{i-1}{n}\right)\right)-\sum_{i=1}^{k}\frac{n-k}{n-k+i}-(k-1)\left(\sum_{j=1}^{n}\frac{1}{j}-\sum_{j=1}^{k-1}\frac{1}{j}-\log(n)\right).\nonumber
\end{eqnarray}
Taking limit with respect to $n$, we have
\begin{eqnarray}
    \lim_{n\to\infty}I_1(n)=-\log \Gamma(k)-k-(k-1)\left(\gamma-\sum_{j=1}^{k-1}\frac{1}{j}\right),\text{ for } k\geq 2\nonumber
\end{eqnarray}
where $\lim_{n\to\infty}\left(\sum_{j=1}^{n}\frac{1}{j}-\log(n)\right)=\gamma.$
\end{proof}

\begin{proof}[\textbf{Proof of Lemma \ref{Lemma.5}}]
From (\ref{Introduction_e2}) the pdf $g_n^{(k)}$ is
\begin{eqnarray}
g_n^{(k)}(x)&=&\dfrac{n!a_nf(a_nx+b_n)(n\overline{F}(a_nx+b_n))^{k-1}F^{n-1}(a_nx+b_n)}{(k-1)!n^{k-1}(n-k)!F^{k-1}(a_nx+b_n)}\nonumber\\
&=&\dfrac{n!g_n(x)(n\overline{F}(a_nx+b_n))^{k-1}}{(k-1)!n^{k}(n-k)!F^{k-1}(a_nx+b_n)}\nonumber
\end{eqnarray}
where $g_n(x)=na_nf(a_nx+b_n)F^{n-1}(a_nx+b_n)$ is the pdf of $F^n(a_nx+b_n).$ Under conditions Theorem \ref{gn_conv} and (\ref{Introduction_e1.2}) then
\begin{eqnarray}\label{order_density}
\lim_{n\to\infty}g_n^{(k)}(x)=
g(x)\dfrac{(-\log G(x))^{k-1}}{(k-1)!}
\end{eqnarray}
locally uniformly on $x\in \{y:\;G(y)>0\}.$
\end{proof}

\begin{proof}[\textbf{Proof of Lemma \ref{TPhi_orderEnt}}] 
\textbf{Case (i)} The entropy of $\phi_\alpha^{(k)}(x)$ is
\begin{eqnarray}
h(\phi_\alpha^{(k)})&=&-\int_{0}^{\infty}\dfrac{\alpha}{(k-1)!} x^{-\alpha k-1}e^{-x^{-\alpha}}\log\left(\dfrac{\alpha}{(k-1)!} x^{-\alpha k-1}e^{-x^{-\alpha}}\right)dx.\nonumber
\end{eqnarray}
Putting $x^{-\alpha}=u,$ we have $-\alpha x^{-\alpha-1}dx=du,$ and 
\begin{eqnarray}
h(\phi_\alpha^{(k)}) &=&-\int_{0}^{\infty}\dfrac{1}{(k-1)!} u^{k-1}e^{-u}\log\left(\dfrac{\alpha}{(k-1)!} u^{\frac{\alpha k+1}{\alpha}}e^{-u}\right)du\nonumber\\
&=&-\int_{0}^{\infty}\dfrac{1}{(k-1)!} u^{k-1}e^{-u}\log\dfrac{\alpha}{(k-1)!}du \nonumber\\
 &&-\dfrac{\alpha k+1}{\alpha(k-1)!}\int_{0}^{\infty}u^{k-1}e^{-u}\log u du +\int_{0}^{\infty}\dfrac{u^{k}e^{-u}}{(k-1)!}du \nonumber\\
&=&-\log\dfrac{\alpha}{(k-1)!}+I_A+\dfrac{1}{(k-1)!}\Gamma(k+1),\label{ent_g1}
\end{eqnarray}
where 
\begin{eqnarray}
I_A&=&-\dfrac{\alpha k+1}{\alpha(k-1)!}\int_{0}^{\infty}u^{k-1}e^{-u}\log u du =-\dfrac{\alpha k+1}{\alpha(k-1)!}A(k) \nonumber \\ 
&=&-\dfrac{\alpha k+1}{\alpha}\left(-\gamma+\sum_{i=1}^{k-1}\dfrac{1}{i}\right), \label{order_frechet_I1}
\end{eqnarray}
using Lemma \ref{Lemma3}. From (\ref{ent_g1}) and (\ref{order_frechet_I1}), 
\begin{eqnarray}
h(\phi_\alpha^{(k)})=-\log\dfrac{\alpha}{(k-1)!}-\dfrac{\alpha k+1}{\alpha}\left(-\gamma+\sum_{i=1}^{k-1}\dfrac{1}{i}\right)+\dfrac{\Gamma(k+1)}{(k-1)!}.\nonumber
\end{eqnarray}
\textbf{Case (ii)} 
The entropy of $\psi_\alpha^{(k)}(x)$ is
\begin{eqnarray}
h(\psi_\alpha^{(k)})&=&-\int_{-\infty}^{0}\dfrac{\alpha}{(k-1)!} (-x)^{\alpha k-1}e^{-(-x)^{\alpha}}\log\left(\dfrac{\alpha}{(k-1)!} (-x)^{\alpha k-1}e^{-(-x)^{\alpha}}\right)dx.\nonumber\\
&=&-\int_{0}^{\infty}\dfrac{1}{(k-1)!} u^{k-1}e^{-u}\log\left(\dfrac{\alpha}{(k-1)!} u^{\frac{\alpha k-1}{\alpha}}e^{-u}\right)du\nonumber
\end{eqnarray}
where $(-x)^{\alpha}=u,$ we have $-\alpha (-x)^{\alpha-1}dx=du$ 
\begin{eqnarray}
&=&-\int_{0}^{\infty}\dfrac{1}{(k-1)!} u^{k-1}e^{-u}\log\dfrac{\alpha}{(k-1)!}du\nonumber\\
 &&-\dfrac{\alpha k-1}{\alpha(k-1)!}\int_{0}^{\infty}u^{k-1}e^{-u}\log u du +\int_{0}^{\infty}\dfrac{u^{k}e^{-u}}{(k-1)!}du\nonumber\\
&=&-\log\dfrac{\alpha}{(k-1)!}+I_A+\dfrac{1}{(k-1)!}\Gamma(k+1),\nonumber
\end{eqnarray}
using Lemma \ref{Lemma3} we get 
\begin{eqnarray}
h(\psi_\alpha^{(k)})=-\log\dfrac{\alpha}{(k-1)!}-\dfrac{\alpha k-1}{\alpha}\left(-\gamma+\sum_{i=1}^{k-1}\dfrac{1}{i}\right)+\dfrac{\Gamma(k+1)}{(k-1)!}.\nonumber
\end{eqnarray}
\textbf{Case (iii)} 
The entropy of $\lambda^{(k)}(x)$ is
\begin{eqnarray}
h(\lambda^{(k)})&=&-\int_{-\infty}^{\infty}\dfrac{1}{(k-1)!} e^{-kx}e^{-e^{-x}}\log\left(\dfrac{1}{(k-1)!} e^{-kx}e^{-e^{-x}}\right)dx. \nonumber\\
 &=&\int_{0}^{\infty}\dfrac{1}{(k-1)!} u^{k-1}e^{-u}\log(k-1)!\;du \;(\text{where, } e^{-x}=u \text{ and } -e^{-x}dx=du)\nonumber\\
&&-\dfrac{k}{(k-1)!}\;\int_{0}^{\infty}u^{k-1}e^{-u}\log u\; du+\int_{0}^{\infty}\dfrac{u^{k}e^{-u}}{(k-1)!}du\nonumber\\
&=&\log(k-1)!+I_A+\dfrac{1}{(k-1)!}\Gamma(k+1),\nonumber
\end{eqnarray}
using Lemma \ref{Lemma3}, we get 
\begin{eqnarray}
h(\lambda^{(k)})=\log(k-1)!-k\left(-\gamma+\sum_{i=1}^{k-1}\dfrac{1}{i}\right)+\dfrac{\Gamma(k+1)}{(k-1)!}.\nonumber
\end{eqnarray}
\end{proof}

The method of proof the main theorem is similar the proof of Theorem 2.1, Saeb (2023b), by using the Helly-Bray theorem. If (\ref{Introduction_e1.3}) holds and $M_n^{(k)}=X_{n-k+1:n}$ for all bounded and continuous function $\mu$ on $[-v,v],$ for $v>0,$ then 
\begin{equation}
    \lim_{n\to\infty} E(\mu((M_n^{(k)}-b_n)a_n^{-1}))I_{\{|(M_n^{(k)}-b_n)a_n^{-1}|\leq v\}}=\int_{-v}^{v}\mu(x)dG^{(k)}(x).\label{HB}
\end{equation}
We need to concentrate on showing
$$\lim_{v\to\infty}\lim_{n\to\infty}E(\mu((M_n^{(k)}-b_n)a_n^{-1}))I_{\{|(M_n^{(k)}-b_n)a_n^{-1}|> v\}}=0$$
 for our problem. Now we state the proof of main theorem.


 \textbf{Proof of Theorem 2.1: Cases (a and c). }
 From (\ref{1.5})  we write,
	\begin{eqnarray}\label{F_Hgn}
	\lim_{n\to\infty}h(g_n^{(k)})=\log \Gamma(k)+k+(k-1)\left(\gamma-\sum_{i=1}^{k-1}\frac{1}{i}\right)+\lim_{n\to\infty}(I_A(n,v)+I_B(n,v)+I_C(n,v))\nonumber
	\end{eqnarray}
	where, $I_A(n,v)=-\int_{-\infty}^{\eta(v)}g_n^{(k)}(x)\log(a_n f(a_nx+b_n))dx,$
 $I_B(n,v)=-\int_{\eta(v)}^{v}g_n^{(k)}(x)$ \linebreak $\log(a_n f(a_nx+b_n))dx$
 and $I_C(n,v)=-\int_{v}^{\infty}g_n^{(k)}(x)\log(a_n f(a_nx+b_n))dx$
and $\eta(v)$ is any function which goes to $l(G)$ as $v\to\infty.$
	
We need to show that
\[\lim_{v\to\infty}\lim_{n\to\infty}(I_A(n,v)+I_C(n,v))=0.\]

First we set 
\[I_A(n,v)=-\int_{-\infty}^{\eta(v)}g_n^{(k)}(x)\log(a_n f(a_nx+b_n))dx.\]
Define the critical point $t_n=\frac{\xi_n-b_n}{a_n},$ which $\xi_n$ is given by $-\log F(\xi_n)\simeq n^{-1/2}.$  If $\frac{\xi_n-b_n}{a_n}\to c>0$ then $n^{1/2}\simeq n\log F(t_n\,a_n+b_n)\to \log(G(c))$ and this is contradict the fact that $n^{1/2}\to\infty.$ Therefore, $t_n\to l(G)$ as $\xi_n\to r(F)$ for large $n.$ We write,
	\begin{eqnarray}
	I_A(n,v)&=&-\int_{-\infty}^{t_n}g_n^{(k)}(x)\log(a_n f(a_nx+b_n))dx\nonumber\\
 &&-\int_{t_n}^{\eta(v)}g_n^{(k)}(x)\log(a_n f(a_nx+b_n))dx,\nonumber\\
	&=&I_{A_1}(n)+I_{A_2}(n,v).\label{F_IA}
	\end{eqnarray}
\textbf{Part I.} We write,
	\begin{eqnarray}
	I_{A_1}(n)&=&-\int_{-\infty}^{\xi_n}\log(a_n f(s))\frac{f(s)\,F^{n-k}(s)(n\bar{F}(s))^{k-1}}{n^{k-1}B(n,k)}ds\;\;(\text{where, } a_nx+b_n=s)\nonumber
	\end{eqnarray}
	Since, $0\leq F^{n-k}(s)(n\bar{F}(s))^{k-1}\leq n^{k-1}F^{n-k}(\xi_n)$ for all $s\in[-\infty,\xi_n]$  and $f(s)> 0$ for all $s,$ and use the mean value theorem, we get 
$$-\frac{F^{n-k}(\xi_n)}{B(n,k)}\left(\log(a_n)\int_{-\infty}^{\xi_n}f(s)ds+\int_{-\infty}^{\xi_n}f(s)\log(f(s))ds\right)<I_{A_1}(n)<0.$$
We assume that $h(f)<\infty,$ then,
	\begin{eqnarray}
	-\frac{F^{n-k}(\xi_n)}{B(n,k)}(\log(a_n)-h(f))<I_{A_1}(n)<0.\nonumber
	\end{eqnarray}
where $h(f)$ is entropy of $f.$	Since, $F(\xi_n)=\exp\{-n^{-1/2}\},$ from left hand side, we get  
	\begin{eqnarray}
	-\frac{F^{n-k}(\xi_n)}{B(n,k)}(\log(a_n)-h(f))&=&-\frac{\exp\{kn^{-1/2}\}}{B(n,k)\exp\{n^{1/2}\}}(\log(a_n)-h(f))\nonumber\\
	&<& I_{A_1}(n)<0.\label{FI_A1}
	\end{eqnarray}
		Without loss of generally, $a(n)\simeq a_n$ is a function of $n.$ If $G=\Phi_\alpha$ then $a(n)\in RV_{\frac{1}{\alpha}}$ (see, Resnick (1987), page 68). By Karamata representation in (\ref{kara}) $a(n)=c(n)\exp\Big\{\int_{1}^{n}\rho(t)
	t^{-1}dt\Big\}$ given $\epsilon>0$ there exists $N$ by $\lim_{n\to\infty}\rho(n)=\frac{1}{\alpha}$ such that $\frac{1-\epsilon}{\alpha}<\rho(n)<\frac{1+\epsilon}{\alpha},$ $n>N,$ so that 
	\begin{eqnarray}
	\left(\frac{1-\epsilon}{\alpha}\right)\log(n/N)<\int_N^n \frac{\rho(t)}{t}dt<\left(\frac{1+\epsilon}{\alpha}\right)\log(n/N).\nonumber
	\end{eqnarray}
	Therefore
	\begin{eqnarray}
	(n/N)^{\frac{1-\epsilon}{\alpha}}c<a(n)<(n/N)^{\frac{1+\epsilon}{\alpha}}c.\label{Fa_n}
	\end{eqnarray}
	
	Similarly, when $G=\Lambda$ we have $a(n)\in RV_0.$ Using Lemma \ref{slow}, for $n>N$ given $\epsilon>0$ such that $\abs{\rho(n)}<\epsilon$ then 
	\begin{eqnarray}
	(n/N)^{-\epsilon}c<a(n)< (n/N)^{\epsilon}c.\label{Fa_n2}
	\end{eqnarray}
Since $F\in\D(G),$ and from (\ref{FI_A1}) we get 
		\begin{eqnarray}
		\lim_{n\to\infty}I_{A_1}(n)=0.\label{F_IA1}
		\end{eqnarray}

\textbf{Part II.} Next, $I_{A_2}(n,v)=\int_{t_n}^{\eta(v)}\log(a_nf(a_nx+b_n))g_n^{(k)}(x)\,dx.$ 
In the case $G=\Phi_\alpha$ we have $\eta(v)=v^{-1}.$ Here we need to find out the dominating function for $g_n*{(k)}(x).$ 
\begin{eqnarray}
g_n^{(k)}(x)&=&\frac{a_nf(a_nx)}{n^{k-1}B(n,k)}\,F^{n-k}(a_nx)(n\bar{F}(a_nx))^{k-1},\nonumber\\
&=&\frac{a_nf(a_nx)}{n^{k-1}B(n,k)}\exp\Big\{\left(\frac{n-k}{n}\right)n\log F(a_nx)\Big\}(n\bar{F}(a_nx))^{k-1},\nonumber
\end{eqnarray}
from (\ref{von_F}) we note that, for given $\epsilon_1>0$ there exists $n_0$ such that $\frac{a_nxf(a_nx)}{-\log (F(a_nx))}\leq (\alpha+\epsilon_1),$ ultimately, therefore $a_nf(a_nx)\leq -(\alpha+\epsilon_1)\frac{\log (F(a_nx))}{x},$ for sufficient large $n.$ We have
\begin{eqnarray}
g_n^{(k)}(x)&<&(\alpha+\epsilon_1)\frac{(-n\log F(a_nx))^{k}}{x\,n^{k}B(n,k)}\exp\Big\{\frac{n-k}{n}n\log F(a_nx)\Big\},\nonumber
\end{eqnarray}
Theorem \ref{rv} show that, for sufficiently large $n$ given $\epsilon_2>0$ and  $n\bar{F}(a_nx)\simeq -n\log(F(a_nx))<\frac{1}{1-\epsilon_2}x^{-(\alpha-\epsilon_2)}.$ Finally, it is well known that, for all $n>n_0$ given $\epsilon_3>0,$ then $-\frac{n-k}{n}<-(1-\epsilon_3).$ Hence, for sufficiently large $n$ such that $n>n_0$ we write
\begin{eqnarray}
&<&\frac{(\alpha+\epsilon_1)}{(1-\epsilon_2)^k}\frac{x^{-1-k(\alpha-\epsilon_2)}}{(k-1)!}\exp\Big\{-\frac{1-\epsilon_3}{1-\epsilon_2}x^{-(\alpha-\epsilon_2)}\Big\},\nonumber\\
&<&\frac{c\alpha' x^{-k\alpha'-1}}{(k-1)!}\exp\Big\{-cx^{-\alpha'}\Big\}=\kappa(x).\label{F_h}
\end{eqnarray}
where, $\alpha'=\alpha-\epsilon_2,$ and $c$ is any positive constant. $\kappa$ is $k^{th}$ largest order statistics of Fr\'echet density function with scale and shape parameters.	For large $n,$ $g_n^{(k)}(x)< \kappa(x)$ and $$\int_{0}^{\infty}\log(\kappa(x))\kappa(x)dx<\infty.$$ We note that $\log(a_nf(a_nx))<\log(\kappa(x))-1.$ Using Lemma \ref{Lemma.5} $\lim_{n\to\infty}g_n^{(k)}(x)=\phi^{(k)}_\alpha(x),$ locally uniformly convergence in $x\in [0,\infty]$ by using dominated convergence theorem (DCT),
	\begin{eqnarray}
	\lim_{v\to\infty}\lim_{n\to\infty} I_{A_2}(n,v)=0.\label{F_IA2}
	\end{eqnarray}
	
	With similar argument, if $G=\Lambda$ then $\eta(v)=-v.$ The dominant function of $g_n(x)$ is given below. From Theorem \ref{lem3_appendix}, for any $\epsilon_1>0$ we have $f(a_nx+b_n)\leq -(1+\epsilon_1)\frac{\log F(a_nx+b_n)}{u(a_nx+b_n)}$ ultimately. We write,
		\begin{eqnarray}
		g_n^{(k)}(x)&=&\frac{a_nf(a_nx+b_n)}{n^{k-1}B(n,k)}F^{n-k}(a_nx+b_n)(n\bar{F}(a_nx+b_n))^{k-1},\nonumber\\
		&<&\frac{(1+\epsilon_1)}{n^{k}B(n,k)}\frac{u(b_n)}{u(a_nx+b_n)}(-n\log F(a_nx+b_n))^{k}\exp\Big\{(n-k)\log F(a_nx+b_n)\Big\}\label{g_n}
		\end{eqnarray}
	 Theorem \ref{lem4_appendix} show that, if $F\in\D(\Lambda)$ with auxiliary function $u$ and for any $\epsilon_2>0,$ there exists $n_0$ such that for all $x<0$ and $a_nx+b_n\geq n_0,$ \begin{eqnarray}
	 \frac{u(b_n)}{u(a_nx+b_n)}&<&\frac{1}{1-\epsilon_2}\left[\frac{-\log F(a_nx+b_n)}{-\log F(b_n)}\right]^{\epsilon_2},\nonumber
	 \end{eqnarray}
	 Since large $n$ and $\epsilon_2>0$ such that $(1-\epsilon_2)<-n\log(F(b_n))<(1+\epsilon_2)$ we write
	 \begin{eqnarray}
	 &<&\frac{\left(-n\log F(a_nx+b_n)\right)^{\epsilon_2}}{(1-\epsilon_2)^{\epsilon_2+1}},\nonumber
	 \end{eqnarray}
	 from (\ref{g_n}) we apply,	
		\begin{eqnarray}
			g_n{(k)}(x)&<&\frac{(1+\epsilon_1)(-n\log F(a_nx+b_n))^{\epsilon_2+k}}{n^{k}B(n,k)(1-\epsilon_2)^{(\epsilon_2+1)}}\exp\Big\{-(n-k)\log F(a_nx+b_n)\Big\},\nonumber
		\end{eqnarray}
		
	 From Lemma \ref{G_Lemma6} for any $\epsilon_3>0$ and $x<0$ we have $n\bar{F}(a_nx+b_n))<(1-\epsilon_3)^2(1+\epsilon_3\abs{x})^{\epsilon_3^{-1}},$
\begin{eqnarray}
&<&\frac{(1+\epsilon_1)(1-\epsilon_3)^{2(\epsilon_2+k)}}{(1-\epsilon_2)^{\epsilon_2+1}}(1+\epsilon_3\abs{x})^{\frac{k+\epsilon_2}{\epsilon_3}}\exp\Big\{-(1-\epsilon_3)^2(1+\epsilon_3\abs{x})^{\epsilon_3^{-1}}\left(\frac{n-k}{n}\right)\Big\},\nonumber\\
&=&c_1(1+\epsilon_3\abs{x})^{c_2}\exp\Big\{-c_3 (1+\epsilon_3\abs{x})^{\epsilon_3^{-1}}\Big\}=\kappa_1(x),\;\; \mbox{say.}\nonumber\\
\label{G_h}
\end{eqnarray}
where, $c_1=\frac{(1+\epsilon_1)(1-\epsilon_3)^{2(\epsilon_2+k)}}{(1-\epsilon_2)^{1+\epsilon_2}}$ and $c_2=\frac{k+\epsilon_2}{\epsilon_3},$ and $c_3=(1-\epsilon_3)^2.$\\
	Similarly, from (\ref{g_n}) and Theorem \ref{lem4_appendix} given $\epsilon_2>0$ and for $x>0$ such that $a_nx+b_n\geq n_0,$ $$\frac{u(b_n)}{u(a_nx+b_n)}<\frac{1}{1-\epsilon_2}\left[\frac{-\log F(b_n)}{-\log F(a_nx+b_n)}\right]^{\epsilon_2}<\frac{(1+\epsilon_2)^{\epsilon_2}}{1-\epsilon_2}\left(-n\log F(a_nx+b_n)\right)^{-\epsilon_2},$$
	from (\ref{g_n}) we write 
	\begin{eqnarray}
	g_n^{(k)}(x)&<&\frac{(1+\epsilon_1)}{n^{k}B(n,k)}\frac{u(b_n)}{u(a_nx+b_n)}(-n\log F(a_nx+b_n))^k\exp\Big\{-n\log F(a_nx+b_n)\Big\},\nonumber\\
	&<&\frac{(1+\epsilon_1)(1+\epsilon_2)^{\epsilon_2}}{(1-\epsilon_2)}(-n\log F(a_nx+b_n))^{k-\epsilon_2}\exp\Big\{-n\log F(a_nx+b_n)\Big\}.\nonumber
	\end{eqnarray}
	From Lemma \ref{G_Lemma6} for $\epsilon_3>0$ and $x>0$ we have $n\bar{F}(a_nx+b_n)<(1+\epsilon_3)^2(1+\epsilon_3x)^{\epsilon_3^{-1}},$ we have
	\begin{eqnarray}
	&<&\frac{(1+\epsilon_1)(1+\epsilon_3)^{2(k-\epsilon_2)}(1+\epsilon_2)^{\epsilon_2}}{(1-\epsilon_2)}(1+\epsilon_3 x)^{\frac{k-\epsilon_2}{\epsilon
			_3}}\exp\Big\{-(1+\epsilon_3)^2(1+\epsilon_3 x)^{\epsilon_3^{-1}}\Big\},\nonumber\\
	&=&c_1(1+\epsilon_3x)^{c_2}
	\exp\Big\{-c_3 (1+\epsilon_3 x)^{\epsilon_3^{-1}}\Big\}=\kappa_2(x),\;\; \mbox{say.}\nonumber\\
 \label{G_h3}
	\end{eqnarray}
	where, $c_1=\frac{(1+\epsilon_1)(1+\epsilon_3)^{2(k-\epsilon_2)}(1+\epsilon_2)^{\epsilon_2}}{(1-\epsilon_2)}$ and $c_2=\frac{k-\epsilon_2}{\epsilon
		_3},$ and $c_3=(1+\epsilon_3)^2.$
	From (\ref{G_h}) and (\ref{G_h3}) we get the general form of dominating function with
	\begin{eqnarray}\label{G_h2}
	\kappa(x)=c(1+\xi\abs{x})^{\frac{1}{\xi}-1}
	\exp\Big\{-c (1+\xi \abs{x})^{\xi^{-1}}\Big\}.
	\end{eqnarray}
	where, $c,$ and $\xi$ are any positive constants.
	
	 From (\ref{G_h2}) for large $n,$ we have $g_n^{(k)}(x)<\kappa(x),$ and $\int_{x<0}\kappa(x)\log(\kappa(x))dx<\infty$ (where, $\kappa$ is the form of Weibull density function for $x<0$). Using DCT
	\begin{eqnarray}
	\lim_{v\to\infty}\lim_{n\to\infty} I_{A_2}(n,v)=0.\label{G_IF2}
	\end{eqnarray}

	 From (\ref{F_IA}), if $F\in\D(G)$ then
	\begin{eqnarray}
	\lim_{v\to\infty}\lim_{n\to\infty}I_{A}(n,v)=0.\label{F_IA3}
	\end{eqnarray}

\textbf{Part III.} Next consider $G=\Phi_\alpha$ then $I_B(n,v)=-\int_{v^{-1}}^{v}\log (a_nf(a_n x))g_n(x)\,dx.$ From arguments in (\ref{F_h}) and Theorem \ref{TPhi_order}-i $\lim_{n\to\infty}g_n^{(k)}(x)=\phi^{(k)}_\alpha(x),$ locally uniformly convergence in $x\in [v^{-1},v]$ by using DCT and definition $g_n^{(k)}$ in (\ref{Introduction_e2}) we have
\begin{eqnarray}\label{F_IB}
\lim_{v\to\infty}\lim_{n\to\infty}I_B(n,v)&=&-\lim_{v\to\infty}\int_{-v}^{v}\lim_{n\to\infty}\log(g_n^{(k)}(x))g_n^{(k)}(x)dx-\lim_{n\to\infty}\log(n^{k-1}B(n,k))\nonumber\\
	&&+\lim_{v\to\infty}\int_{-v}^{v}\lim_{n\to\infty}\log(F^{n-k}(a_nx)(n\bar{F}(a_nx))^{k-1})g_n^{(k)}(x)dx\nonumber\\
	&=&h(\phi^{(k)}_\alpha)-\log\Gamma(k)+\int_{\Real}\log(\Phi_\alpha(x)x^{-\alpha(k-1)})\phi^{(k)}(x)dx\nonumber
 \end{eqnarray}
where $\lim_{n\to\infty}\log(n^{k-1}B(n,k))=\log\Gamma(k);$ Taking $u=x^{-\alpha}$ and after some calculation we get
 \begin{eqnarray}
 &=&h(\phi^{(k)}_\alpha)-\log\Gamma(k)-\frac{1}{(k-1)!}\int_{\Real}u^k e^{-u}du+\frac{(k-1)}{(k-1)!}\int_{\Real}u^{k-1} e^{-u}\;\log u\;du\nonumber\\
	&=&h(\phi^{(k)}_\alpha)-\log\Gamma(k)-k-(k-1)\left(\gamma-\sum_{i=1}^{k-1}\frac{1}{i}\right).
	\end{eqnarray}

In the case $G=\Lambda$ giving the dominated function in (\ref{G_h2}) and Theorem \ref{TPhi_order} for $x\in [-v,v]$ by using DCT, (with similar calculation in eq. \ref{F_IB})
	\begin{eqnarray}\label{G_IG}
\lim_{v\to\infty}\lim_{n\to\infty}I_B(n,v)=h(\lambda^{(k)})-\log\Gamma(k)-k-(k-1)\left(\gamma-\sum_{i=1}^{k-1}\frac{1}{i}\right).
	\end{eqnarray}

\textbf{Part IV.} Finally, $I_{C}(n,v)=-\int_{v}^{\infty}\log\left(\frac{ a_nf(a_n)}{\bar{F}(a_n)}\frac{f(a_nx+b_n)}{f(a_n)}\bar{F}(a_n)\right)\,g_n^{(k)}(x)dx$ by dividing and multiplying $f(a_n)$ and $\bar{F}(a_n).$ Since $f$ is positive nonincreasing function $\epsilon<\frac{f(a_nx+b_n)}{f(a_n)}\leq 1$ for all $x\geq 1$ and $0<\epsilon<1.$ It is trivially to show
	\begin{eqnarray}
	-\log (L(n))(1-F^n(a_nv+b_n))<I_{C}(n,v)<	-\log (L(n)\epsilon)(1-F^n(a_nv+b_n)),\nonumber
	\end{eqnarray}
	where, $L(n)=\dfrac{a_nf(a_n)}{\bar{F}(a_n)}n\bar{F}(a_n).$
From Theorem \ref{thm_von}, if $f$ is nonincreasing function and $F\in\D(\Phi_\alpha)$  then $L(n)\to  \alpha,$ as $n\to\infty.$ Also, if $F\in\D(\Lambda)$ then $\lim_{n\to\infty} L(n)=1.$ Since $\lim_{n\to\infty}F^n(a_nv+b_n)=G(v),$  we get
	\begin{eqnarray}
	\lim_{v\to \infty}\lim_{n\to\infty}I_{C}(n,v)=0.\label{F_IC}
	\end{eqnarray}

Finally, under assumptions (\ref{F_Hgn}), (\ref{F_IA3}) and (\ref{F_IC}) hold the proofs.

\textbf{Case (b).}
	Suppose $F \in \mathcal{D}(\Psi_\alpha),$ from Proposition 1.13 in Resnick (1987), $\bar{F}(r(F)-x^{-1})\in RV_{-\alpha}$ is regularly varying with index $-\alpha$. We set
	$a_n=1/(r(F)-\delta_n)$ and $\delta_n=F^{\leftarrow}(1 - \frac{1}{n}),$
	and then
	\[\lim_{n\to\infty}F^n(r(F)+a_n^{-1}x)=\Psi_\alpha(x),\;\;\text{ for }x<0.\]

	From Lemma \ref{rel_1}, $Z_i=1/(r(F)-X_i)$ is a iid rv with df $F_Z\in \D(\Phi_\alpha)$ and Lemma \ref{Lemma.5}, show that $$\lim_{n\to\infty}\tilde{g}_n^{(k)}(y)=\phi_\alpha^{(k)}(y),$$
	 where, $\tilde{g}_n^{(k)}(y)=\frac{(B(n,k))^{-1}}{a_ny^2n^{k-1}}f(r(F)-1/(a_ny))F^{n-k}(r(F)-1/(a_ny))(n\bar{F}(r(F)-1/(a_ny)))^{k-1}.$ From the definition of entropy, we write,
	\begin{eqnarray}
	h(\tilde{g}_n^{(k)})&=&2\int_{\epsilon_n}^{\infty}\log y \,\tilde{g}_n^{(k)}(y)dy\nonumber\\
 &&-\int_{\epsilon_n}^{\infty}\log\left(\frac{(B(n,k))^{-1}}{a_n n^{k-1}}f(y_n)F^{n-k}(y_n)(n\bar{F}(y_n))^{k-1}\right)\tilde{g}_n^{(k)}(y)dy\nonumber
	\end{eqnarray}
	where, $y_n=(r(F)-1/(a_ny))$ and  $\epsilon_n=a_n/(r(F)-l(F));$ We consider $y=-1/x,$ $x_n=r(F)+x/a_n$ we write
	\begin{eqnarray}
	&=&2E\left(\log\left(\vee_{i=1}^{n-k+1}Z_i a_n\right)
	I_{\{\epsilon_n\leq\vee_{i=1}^{n-k+1}Z_i a_n\}}\right)-\int_{\beta_n}^{0}\log\left(\tilde{g}_n^{(k)}(x)\right)\tilde{g}_n^{(k)}(x)dx\nonumber
	\end{eqnarray}
	where, $\beta_n=(l(F)-r(F))/a_n.$ Hence,
	\[h(g_n^{(k)})=h(\tilde{g}_n^{(k)})-2E\left(\log\left(\vee_{i=1}^{n-k+1}Z_i a_n\right)
	I_{\{\epsilon_n\leq \vee_{i=1}^{n-k+1}Z_i a_n\}}\right).\]
	According to lemma \ref{Lemma3.1}, $Y_i=\alpha\log Z_i$ is iid rv with $F_Y\in\D(\Lambda)$ then
\begin{eqnarray}	\label{exp2}
\lim_{n\to\infty}h(g_n^{(k)})=\lim_{n\to\infty}h(\tilde{g}_n^{(k)})-\lim_{n\to\infty}\frac{2}{\alpha}E\left(\vee_{i=1}^{n-k+1}Y_i+\alpha\log a_n\right)I_{\{\alpha\log\epsilon_n\leq \vee_{i=1}^{n-k+1}Y_i+\alpha\log a_n\}}.\nonumber\\
 \end{eqnarray}
With similar argument from the moment convergence theorem (Theorem \ref{Moment}-(iii)), it trivial to show that for $r(F)<\infty$
	\begin{eqnarray}
	E(\abs{Y_1}I_{\{Y_1\leq 0\}})&=&E\left(\abs{\log(r(F)-X_1)}I_{\{1\leq(r(F)-X_1<r(F))\}}\right)\nonumber\\
	&\stackrel{<}{\text{(Jensen's inequality)}}&\left(\abs{\log E(r(F)-X_1)I_{\{0\leq X_1\leq r(F)\}}}\right)<\infty,\nonumber
	\end{eqnarray}
 then
 \begin{eqnarray}\label{Exp1}
     \lim_{n\to\infty}E\left(\vee_{i=1}^{n-k+1}Y_i+\alpha\log a_n\right)=\int_{\Real}y\lambda^{(k)}(y)dy=-\left(-\gamma+\sum_{i=1}^{k-1}\frac{1}{i}\right).
 \end{eqnarray}
	From (\ref{exp2}) and (\ref{Exp1}), under conditions Theorems 2.1 (a) and , we get
	\begin{eqnarray}	\lim_{n\to\infty}h(g_n^{(k)})&=&h(\phi^{(k)})+\frac{2}{\alpha}\left(-\gamma+\sum_{i=1}^{k-1}\frac{1}{i}\right)\nonumber\\
	&=&h(\psi_\alpha^{(k)}).\nonumber
	\end{eqnarray}

\hfill $\square$ 

\appendix
\section[Appendix A]{}\label{more results}

\begin{rem}\label{kara1}(Remark, Page 19, Resnick (1987))
	If $U\in RV_\rho$ then $U$ has representation
	\begin{eqnarray}
	U(x)=c(x)\exp\Big\{\int_{1}^{x}t^{-1} \rho(t)dt\Big\},\label{kara}
	\end{eqnarray}
	where, $\lim_{x\to\infty}c(x)=c$ and $\lim_{t\to\infty}\rho(t)=\rho.$
\end{rem}

\begin{lem}\label{slow}(Corollary, Page 17, Resnick (1987))
	$L$ is slowly varying iff $L$ can be represented as
	\begin{eqnarray}
	L(x)=c(x)\exp\Big\{\int_{1}^{x}t^{-1} \epsilon(t)dt\Big\},
	\end{eqnarray} for $x>0$ and $\lim_{x\to\infty}c(x)=c$ and $\lim_{t\to\infty}\epsilon(t)=0.$
\end{lem}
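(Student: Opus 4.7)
The plan is to prove both directions of the Karamata representation for slowly varying functions.

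For the (easy) "if" direction, assume $L(x) = c(x)\exp\!\left(\int_1^x \epsilon(t)/t\, dt\right)$ with $c(x) \to c > 0$ and $\epsilon(t) \to 0$. Then for any fixed $\lambda > 0$,
\[
\frac{L(\lambda x)}{L(x)} \;=\; \frac{c(\lambda x)}{c(x)}\,\exp\!\left(\int_x^{\lambda x}\frac{\epsilon(t)}{t}\,dt\right).
\]
The prefactor tends to $1$, and after the substitution $t = xu$ the exponent becomes $\int_1^{\lambda}\epsilon(xu)/u\,du$, which tends to $0$ by dominated convergence (since $\epsilon$ is eventually bounded and $\epsilon(xu)\to 0$ pointwise). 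Hence $L(\lambda x)/L(x)\to 1$, i.e.\ $L$ is slowly varying.

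For the converse, suppose $L$ is slowly varying. The plan is to invoke the uniform convergence theorem (UCT), which upgrades pointwise convergence $L(\lambda x)/L(x)\to 1$ to convergence uniform in $\lambda$ on compact subsets of $(0,\infty)$. Pass to additive coordinates by setting $\eta(y) := \log L(e^{y})$; slow variation then becomes $\eta(y+s)-\eta(y)\to 0$ as $y\to\infty$, uniformly in $s$ on bounded intervals. Introduce the mollified version $\tilde\eta(y) := \int_0^1 \eta(y+s)\,ds$. By the UCT, $\tilde\eta(y)-\eta(y)\to 0$, and $\tilde\eta$ is absolutely continuous with $\tilde\eta'(y) = \eta(y+1) - \eta(y) \to 0$.

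Now apply the fundamental theorem of calculus to $\tilde\eta$ and add back the vanishing correction:
\[
\eta(y) \;=\; \tilde\eta(0) \,+\, \int_0^y \tilde\eta'(s)\,ds \,+\, [\eta(y)-\tilde\eta(y)].
\]
Transforming back via $x = e^y$, $t = e^s$, this gives
\[
\log L(x) \;=\; \tilde\eta(0) \,+\, \int_1^x \frac{\epsilon(t)}{t}\,dt \,+\, r(x),
\]
where $\epsilon(t) := \tilde\eta'(\log t)\to 0$ and $r(x) := \eta(\log x)-\tilde\eta(\log x)\to 0$. Setting $c(x) := \exp(\tilde\eta(0) + r(x))$ gives $c(x)\to\exp(\tilde\eta(0)) =: c > 0$ and produces exactly the claimed representation.

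The main obstacle is the UCT itself: slow variation is an a priori pointwise hypothesis, and upgrading it to local uniform convergence requires a Baire category (or measurability/Egorov-type) argument on $\eta$, exploiting that the limit functional $y\mapsto \lim_{y\to\infty}[\eta(y+s)-\eta(y)] = 0$ is identically zero on $\mathbb{R}$. Once the UCT is in hand, the smoothing $\tilde\eta$ and the change of variables are essentially routine, and care is only needed to verify that the constructed $c(x)$ stays bounded away from $0$ (which follows from $r(x)$ being bounded for large $x$).
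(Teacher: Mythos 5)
You should know at the outset that the paper contains no proof of this lemma: it is quoted verbatim, as an appendix background fact, from the Corollary on page 17 of Resnick (1987), so there is nothing internal to compare against --- what you have written is a proof of the cited textbook result itself. On its own terms your argument is the standard Karamata representation proof and is essentially correct. The ``if'' direction is fine (indeed the crude bound $\abs{\int_x^{\lambda x}\epsilon(t)t^{-1}dt}\leq \sup_{t\geq \min(x,\lambda x)}\abs{\epsilon(t)}\cdot\abs{\log\lambda}$ dispenses with dominated convergence altogether), and the converse correctly follows the classical route: pass to $\eta(y)=\log L(e^y)$, invoke the uniform convergence theorem, mollify, and apply the fundamental theorem of calculus. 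You are also right to identify the UCT as the only genuinely nontrivial ingredient, and that it requires a Baire category or Egorov-type argument together with measurability of $L$ (measurability is implicit in Resnick's framework; without it the representation can fail). Two small technical points deserve explicit care. First, $\tilde\eta'(y)=\eta(y+1)-\eta(y)$ holds only almost everywhere (Lebesgue differentiation), which suffices since $\epsilon$ enters only through an integral, but you should say so. Second, anchoring the FTC at $y=0$, i.e.\ writing $\tilde\eta(0)$ and $\int_1^x\epsilon(t)t^{-1}dt$, presupposes that $\eta$ is locally integrable all the way down to $y=0$ (equivalently that $\log L$ is integrable near $x=1$); the UCT yields local boundedness of $\eta$ only on $[y_0,\infty)$ for some large $y_0$. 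The routine repair is to anchor at $y_0$, set $\epsilon(t)=0$ for $t<e^{y_0}$, and absorb the initial segment $L(x)\exp\{-\int_1^x\epsilon(t)t^{-1}dt\}$ into $c(x)$ for $x<e^{y_0}$; since the hypotheses on $c$ and $\epsilon$ constrain only their behavior at infinity, the stated representation for all $x>0$ is recovered intact. With these two standard emendations your proof is complete and matches the argument Resnick's corollary rests on.
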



\begin{thm}\label{a_rv}(Proposition 0.12, Resnick (1987))
	If $V\in\Pi$ with auxiliary function $a(t)$ then $a(\cdot)\in RV_0.$
\end{thm}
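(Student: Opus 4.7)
The plan is to show directly from the definition of the class $\Pi$ that $a(\lambda t)/a(t) \to 1$ for every fixed $\lambda > 0$, which is exactly the statement $a \in RV_0$. Recall that $V \in \Pi$ with auxiliary function $a(\cdot) > 0$ means that for every $x > 0$,
\begin{equation*}
\lim_{t\to\infty} \frac{V(tx) - V(t)}{a(t)} = \log x.
\end{equation*}
So the whole proof amounts to extracting the ratio $a(\lambda t)/a(t)$ from the defining limit and showing it must tend to $1$.

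The key step is to evaluate the quantity $\dfrac{V(\lambda t x) - V(\lambda t)}{a(t)}$ in two different ways. On the one hand, splitting at $V(t)$ gives
\begin{equation*}
\frac{V(\lambda t x) - V(\lambda t)}{a(t)} = \frac{V(t(\lambda x)) - V(t)}{a(t)} - \frac{V(t\lambda) - V(t)}{a(t)} \longrightarrow \log(\lambda x) - \log \lambda = \log x,
\end{equation*}
where each term tends to its $\Pi$-limit as $t \to \infty$. On the other hand, writing $\lambda t$ as the base point,
\begin{equation*}
\frac{V(\lambda t x) - V(\lambda t)}{a(t)} = \frac{V((\lambda t) x) - V(\lambda t)}{a(\lambda t)} \cdot \frac{a(\lambda t)}{a(t)},
\end{equation*}
and since $\lambda t \to \infty$ whenever $t \to \infty$ and $\lambda > 0$ is fixed, the first factor on the right tends to $\log x$ by the $\Pi$ hypothesis applied along the sequence $\lambda t$.

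To conclude, I would pick a specific $x$ with $\log x \neq 0$ (for concreteness $x = e$, giving $\log x = 1$) so that the limit $\log x$ is nonzero. Then solving the two expressions for $a(\lambda t)/a(t)$ and passing to the limit yields
\begin{equation*}
\lim_{t\to\infty} \frac{a(\lambda t)}{a(t)} \;=\; \lim_{t\to\infty} \frac{(V(\lambda t x) - V(\lambda t))/a(t)}{(V(\lambda t x) - V(\lambda t))/a(\lambda t)} \;=\; \frac{\log x}{\log x} \;=\; 1,
\end{equation*}
which is the definition of $a \in RV_0$. There is no real obstacle beyond bookkeeping: the only mild subtlety is ensuring that the $\Pi$-convergence is used at the shifted argument $\lambda t$, which is legitimate because $\lambda t \to \infty$, and that the chosen $x$ makes the denominator nonzero in the limit so that the ratio identity is meaningful. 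The whole argument is a short two-line manipulation once the correct splitting is identified.
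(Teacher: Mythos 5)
Your proof is correct: the splitting $V(\lambda t x)-V(\lambda t)=[V(t\cdot\lambda x)-V(t)]-[V(t\lambda)-V(t)]$, the re-centering at $\lambda t$, and the choice $x=e$ so the limiting denominator $\log x=1$ is nonzero (hence $V(\lambda t x)-V(\lambda t)\neq 0$ eventually, making the ratio identity legitimate) together give $a(\lambda t)/a(t)\to 1$ with no gaps. The paper itself states this result without proof, quoting Proposition 0.12 of Resnick (1987), and your argument is essentially the standard proof given there, so nothing further is needed.
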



\begin{thm}(Proposition 0.8, Resnick (1987))\label{rv}
	Suppose $U\in RV_{\rho},$ $\rho\in\Real.$ Take $\epsilon>0.$ Then there exists $t_0$ such that for $x\geq 1$  and $t\geq t_0$
	\[(1-\epsilon)x^{\rho-\epsilon}<\frac{U(tx)}{U(t)}<(1+\epsilon)x^{\rho+\epsilon}.\]
\end{thm}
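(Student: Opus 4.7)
My plan is to start from equation (\ref{1.5}), which via Lemma \ref{Lemma1} already reduces matters to computing the limit of
\[
I_2(n) = \int_A g_n^{(k)}(x)\,\log\!\bigl(a_n f(a_n x + b_n)\bigr)\,dx.
\]
I would split this integral as $I_A(n,v) + I_B(n,v) + I_C(n,v)$ over the regions $(-\infty,\eta(v))$, $[\eta(v),v]$, and $(v,\infty)$, with $\eta(v) \to l(G)$ as $v \to \infty$, and prove $\lim_{v\to\infty}\lim_{n\to\infty} (I_A + I_C) = 0$ while $\lim_{v\to\infty}\lim_{n\to\infty} I_B$ produces exactly the combination that, when added to the constants in (\ref{1.5}), yields the values tabulated in Lemma \ref{TPhi_orderEnt} in each of the three cases.

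For the central piece $I_B$, I would apply Lemma \ref{Lemma.5} (local uniform convergence $g_n^{(k)} \to g^{(k)}$ on $\{G>0\}$) and rewrite the integrand via
\[
\log(a_n f(a_n x + b_n)) \;=\; \log g_n^{(k)}(x) \;-\; \log\!\frac{F^{n-k}(a_n x + b_n)(n\bar F(a_n x + b_n))^{k-1}}{n^{k-1}B(n,k)},
\]
so that the limit splits into $-h(g^{(k)})$ plus a Gamma-type integral that combines neatly with the constants produced by Lemma \ref{Lemma1}. To justify the interchange of limit and integral I need an integrable majorant of $g_n^{(k)}$ uniform in $n$. For $G = \Phi_\alpha$ the von Mises form $a_n x f(a_n x)/(-\log F(a_n x)) \to \alpha$, combined with the Potter bound of Theorem \ref{rv} applied to $n\bar F(a_n x)$, yields a Fr\'echet-tail majorant $\kappa(x) = c\,x^{-k\alpha'-1}\exp\{-c\,x^{-\alpha'}\}$. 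For $G = \Lambda$ I would use the auxiliary-function characterization together with $a(\cdot)\in RV_0$ (Theorem \ref{a_rv}) and the Potter bound on $u(b_n)/u(a_n x + b_n)$, producing a majorant of the form $c(1+\xi|x|)^{1/\xi - 1}\exp\{-c(1+\xi|x|)^{1/\xi}\}$ valid on both half-lines.

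For the left tail $I_A$, I would further split at the critical point $t_n = (\xi_n - b_n)/a_n$ defined by $-\log F(\xi_n) \sim n^{-1/2}$, which forces $t_n \to l(G)$. On $(-\infty, t_n)$, a change of variable $s = a_n x + b_n$ transforms the integral into one against $f(s)$ weighted by $F^{n-k}(s)(n\bar F(s))^{k-1}/(n^{k-1}B(n,k))$, bounded above by $F^{n-k}(\xi_n)/B(n,k) = e^{-(n-k)/\sqrt n}/B(n,k)$; the hypothesis $h(f) < \infty$ controls $\int f \log f$, while the Karamata representation (\ref{kara}) of $a_n$ (slow variation in the Gumbel case, Lemma \ref{slow}) makes $\log a_n$ grow only polynomially, so the super-exponential decay kills this piece. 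On $[t_n, \eta(v)]$ the majorant built for $I_B$ still applies, and DCT plus $\eta(v) \to l(G)$ sends this piece to zero. For $I_C$ on $(v,\infty)$, the nonincreasing hypothesis on $f$ gives $f(a_n x + b_n)/f(a_n) \leq 1$ for $x \geq 1$, reducing the integrand essentially to $\log L(n)\cdot(1 - F^n(a_n v + b_n))$ where $L(n) = n a_n f(a_n)$; a standard von Mises-type result then gives $L(n) \to \alpha$ (Fr\'echet) or $1$ (Gumbel), and $F^n(a_n v + b_n) \to G(v) \to 1$ as $v \to \infty$ makes this piece vanish.

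I would handle case (ii) by reduction rather than direct computation: setting $Z_i = 1/(r(F) - X_i)$, the $Z_i$ are iid with df in $\D(\Phi_\alpha)$, so case (i) applies to the density $\tilde g_n^{(k)}$ of their $k^{th}$ extreme. The Jacobian $y^{-2}$ from $y = -1/x$ relating $g_n^{(k)}$ to $\tilde g_n^{(k)}$ contributes, upon taking logs, a correction of the form $2\,E[\log(\vee_{i=1}^{n-k+1} Z_i\, a_n)]$. Further setting $Y_i = \alpha \log Z_i \in \D(\Lambda)$ and invoking moment convergence for the $k^{th}$ extreme (the hypothesis $r(F) < \infty$ together with Jensen's inequality makes the negative-part moment of $Y_1$ finite), this correction converges to $-\frac{2}{\alpha}\bigl(-\gamma + \sum_{i=1}^{k-1} 1/i\bigr)$, and the identity $h(\phi_\alpha^{(k)}) + \frac{2}{\alpha}\bigl(-\gamma + \sum_{i=1}^{k-1} 1/i\bigr) = h(\psi_\alpha^{(k)})$ follows by a routine check against Lemma \ref{TPhi_orderEnt}. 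The step I expect to be the main obstacle is the uniform dominating-function construction for $I_B$ in the Gumbel case, where $u(b_n)/u(a_n x + b_n)$ must be controlled simultaneously on both sides of the origin via Potter bounds and the two branches assembled into a single integrable majorant.
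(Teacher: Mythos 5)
Your proposal does not prove the statement at hand. The statement is Potter's bound for regularly varying functions (Proposition 0.8 of Resnick (1987)): given $U\in RV_\rho$ and $\epsilon>0$, there is a $t_0$ with $(1-\epsilon)x^{\rho-\epsilon}<U(tx)/U(t)<(1+\epsilon)x^{\rho+\epsilon}$ for all $x\geq 1$, $t\geq t_0$. Nothing in your write-up addresses this: you never mention $U$, regular variation, or the ratio $U(tx)/U(t)$. What you have sketched instead is the proof of the paper's main result, Theorem \ref{TPhi_order}, on entropy convergence of the $k^{th}$ extreme (the decomposition into $I_A$, $I_B$, $I_C$, the critical point $t_n$, the Fr\'echet and Gumbel majorants, and the $\Psi_\alpha$ case via $Z_i=1/(r(F)-X_i)$). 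Worse, your sketch explicitly \emph{invokes} Theorem \ref{rv} as an ingredient (``the Potter bound of Theorem \ref{rv} applied to $n\bar F(a_nx)$''), so read as a proof of Theorem \ref{rv} it is circular. Note also that the paper itself offers no proof of this statement; it is quoted from Resnick (1987) in the appendix as a known tool.

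A correct, self-contained argument is short and goes through the Karamata representation already recorded in the paper as Remark \ref{kara1}: since $U\in RV_\rho$, write $U(x)=c(x)\exp\bigl\{\int_1^x s^{-1}\rho(s)\,ds\bigr\}$ with $c(x)\to c>0$ and $\rho(s)\to\rho$. Then for $x\geq 1$,
\begin{equation*}
\frac{U(tx)}{U(t)}=\frac{c(tx)}{c(t)}\exp\Big\{\int_t^{tx}\frac{\rho(s)}{s}\,ds\Big\}.
\end{equation*}
Pick $\delta>0$ small (depending on $\epsilon$) and $t_0$ so large that $\abs{\rho(s)-\rho}<\delta$ for $s\geq t_0$ and $1-\delta<c(tx)/c(t)<1+\delta$ for $t\geq t_0$ (possible since $c$ converges to a positive limit). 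The exponential factor is then squeezed between $\exp\{(\rho-\delta)\log x\}=x^{\rho-\delta}$ and $x^{\rho+\delta}$, and choosing $\delta\leq\epsilon$ with $1\pm\delta$ within $1\pm\epsilon$ gives $(1-\epsilon)x^{\rho-\epsilon}<U(tx)/U(t)<(1+\epsilon)x^{\rho+\epsilon}$, as claimed. You should supply this (or cite Resnick directly), and reserve your entropy-convergence argument for Theorem \ref{TPhi_order}, where it belongs.
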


\begin{thm}(Proposition 2.1, Resnick (1987))\label{Moment}
	Let $F\in\mathcal{D}(G).$
	\begin{enumerate}
		\item[(i)] If $G=\Phi_\alpha,$ then $a_n=\left(1/(1-F)\right)^{\leftarrow}(n),$ $b_n=0,$ and if for some integer $0<k<\alpha,$
		\[\int_{-\infty}^{0}\mid{x}\mid^kF(dx)<\infty,\]
		then $\lim_{n\to\infty}E\left(\dfrac{M_n}{a_n}\right)^k=\int_{0}^{\infty}x^k\Phi_\alpha(dx)=\Gamma\left(1-\frac{k}{\alpha}\right).$
		\item[(ii)] If $G=\Psi_\alpha,$ then $a_n=r(F)-\left(1/(1-F)\right)^{\leftarrow}(n),$ $b_n=r(F),$ and if for some integer $k>0,$
		\[\int_{-\infty}^{r(F)}\mid{x}\mid^kF(dx)<\infty,\]
		then $\lim_{n\to\infty}E\left(\dfrac{M_n-r(F)}{a_n}\right)^k=\int_{-\infty}^{0}x^k\Psi_\alpha(dx)=(-1)^k\Gamma\left(1+\frac{k}{\alpha}\right).$
		\item[(iii)] If $G=\Lambda,$ then $b_n=\left(1/(1-F)\right)^{\leftarrow}(n),$ $a_n=u(b_n),$ and if for some integer $k>0,$
		\[\int_{-\infty}^{0}\mid{x}\mid^kF(dx)<\infty,\]
		then $\lim_{n\to\infty}E\left(\dfrac{M_n-b_n}{a_n}\right)^k=\int_{-\infty}^{\infty}x^k\Lambda(dx)=(-1)^k\Gamma^{(k)}(1),$
		where $\Gamma^{(k)}(1)$ is the k-th derivative of the Gamma function at $x=1.$
	\end{enumerate}
\end{thm}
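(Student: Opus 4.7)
The plan is to prove all three cases by combining the weak convergence $(M_n-b_n)/a_n\Rightarrow G$ — which is precisely the statement $F\in\mathcal{D}(G)$ — with uniform integrability of the $k$-th powers, upgrading convergence in distribution to convergence of $k$-th moments. For each fixed $v>0$ I would write
\begin{equation*}
E\left(\frac{M_n-b_n}{a_n}\right)^{k} = J_1(n,v)+J_2(n,v)+J_3(n,v),
\end{equation*}
where $J_1$ collects the contribution from $\{|(M_n-b_n)/a_n|\le v\}$, $J_2$ from $\{(M_n-b_n)/a_n>v\}$, and $J_3$ from $\{(M_n-b_n)/a_n<-v\}$. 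The Helly-Bray theorem applied to $x\mapsto x^k$ on $[-v,v]$ gives $\lim_{n\to\infty}J_1(n,v)=\int_{-v}^{v}x^k\,dG(x)$, which tends to $\int_{\Real} x^k\,dG(x)$ as $v\to\infty$ — the claimed limit. Everything reduces to showing
\begin{equation*}
\lim_{v\to\infty}\limsup_{n\to\infty}\bigl(|J_2(n,v)|+|J_3(n,v)|\bigr)=0.
\end{equation*}

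For the \textbf{upper tail}, use $P((M_n-b_n)/a_n>y)\le n\bar F(b_n+ya_n)$ and the integration-by-parts identity bounding $|J_2(n,v)|$ by $v^k$ times that probability at $y=v$ plus $k\int_v^\infty y^{k-1}$ times that probability. For Fréchet, Potter's bounds (Theorem \ref{rv}) on $\bar F\in RV_{-\alpha}$ give $n\bar F(a_ny)\le(1+\epsilon)y^{-\alpha+\epsilon}$ for any $\epsilon<\alpha-k$, whence $|J_2|\le C\int_v^\infty y^{k-1-\alpha+\epsilon}\,dy\to 0$. For Weibull, $M_n\le r(F)$ almost surely, so $J_2\equiv 0$. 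For Gumbel, $a_n=u(b_n)\in RV_0$ (Theorem \ref{a_rv}), and Potter-type bounds applied to the $\Pi$-variation of $-\log F$ give $n\bar F(b_n+ya_n)\le Ce^{-(1-\epsilon)y}$ for large $y,n$, so $|J_2|\le C\int_v^\infty y^{k-1}e^{-(1-\epsilon)y}\,dy\to 0$.

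For the \textbf{lower tail} in the Fréchet and Weibull cases, the elementary bound $M_n\ge X_1$ gives $|M_n-b_n|^k I_{\{M_n<b_n\}}\le|X_1-b_n|^k I_{\{X_1<b_n\}}$. In Fréchet ($b_n=0$), this yields $|J_3|\le a_n^{-k}E|X_1|^k I_{\{X_1<0\}}\to 0$, using $a_n\to\infty$ and the hypothesis $\int_{-\infty}^0|x|^k\,dF<\infty$. In Weibull, write $|J_3|=EW_n^k I_{\{W_n>v\}}$ with $W_n=(r(F)-M_n)/a_n\ge 0$; Potter's bounds on $\bar F_Z(t):=\bar F(r(F)-1/t)\in RV_{-\alpha}$ (encoding $F\in\mathcal{D}(\Psi_\alpha)$) yield $F^n(r(F)-ya_n)\le\exp(-(1-\epsilon)y^{\alpha+\epsilon})$ on a moderate $y$-range, and $\int|x|^k\,dF<\infty$ absorbs the remaining far tail.

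The \textbf{main obstacle} is the Gumbel lower tail when $r(F)=+\infty$ and $b_n/a_n\to\infty$, because the naive bound $E|X_1-b_n|^k/a_n^k$ need not vanish. I would split the lower tail at an intermediate threshold $w_n$ (say $w_n=\log n$): on $y\in[v,w_n]$ the $\Pi$-variation of $-\log F$ yields $F^n(b_n-ya_n)\le\exp(-(1-\epsilon)e^y)$, whose super-exponential decay integrated against $y^{k-1}$ vanishes as $v\to\infty$; on $y>w_n$ invoke $|M_n-b_n|^k I_{\{M_n<b_n-ya_n\}}\le|X_1-b_n|^k I_{\{X_1<b_n-ya_n\}}$ together with $\int_{-\infty}^0|x|^k\,dF(x)<\infty$ to absorb the far tail, after checking that the crossover term $(b_n^k/a_n^k)\,P(X_1<b_n-w_na_n)\to 0$ thanks to the super-polynomial decay of $P(X_1<b_n-w_na_n)$ coming from the same $\Pi$-variation estimate. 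Calibrating $w_n$ so that the two pieces fit together is the delicate technical step.
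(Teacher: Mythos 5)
This statement is quoted in the paper's appendix from Resnick (1987, Proposition 2.1) and is not proved there, so your proposal can only be measured against the standard proof. Your overall architecture is exactly the standard one: weak convergence $(M_n-b_n)/a_n\Rightarrow G$ plus uniform integrability of $k$-th powers, implemented through the decomposition $J_1+J_2+J_3$, Helly--Bray on $[-v,v]$, and Potter-type tail bounds. Your cases (i) and (ii) are essentially complete and correct: the Fr\'echet upper tail via $n\bar{F}(a_ny)\le(1+\epsilon)y^{-\alpha+\epsilon}$ with $\epsilon<\alpha-k$ (this is where the hypothesis $k<\alpha$ enters), the Weibull upper tail trivial since $M_n\le r(F)$ a.s., and the lower tails via the pointwise bound $|M_n-b_n|\le|X_1-b_n|$ on the relevant events combined with the assumed moment condition (in the Weibull far tail one should also record the factor $F^{n-1}(r(F)-t_0^{-1})$ coming from independence, which is what actually kills the $a_n^{-k}$ blow-up; your sketch implicitly needs this).

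In case (iii) there are two concrete defects. First, the claimed uniform bound $n\bar{F}(b_n+ya_n)\le Ce^{-(1-\epsilon)y}$ is false in general: for $\bar{F}(x)=\exp\{-(\log x)^2\}\in\D(\Lambda)$ one checks that for each fixed large $n$ the map $y\mapsto n\bar{F}(b_n+ya_n)$ decays only polynomially in $y$, with an $n$-dependent exponent of order $2\log b_n$, so no $n$-uniform exponential bound can hold on the whole half-line. What the $\Pi$-variation estimates (Theorem \ref{lem4_appendix}, Lemma \ref{G_Lemma6}) actually give is the Potter-type bound $n\bar{F}(b_n+ya_n)\le(1+\epsilon)^2(1+\epsilon y)^{-1/\epsilon}$ for $y\ge0$ and $n\ge N(\epsilon)$; taking $\epsilon^{-1}>k+1$ this is integrable against $y^{k-1}$, so your conclusion survives, but the stated bound must be replaced. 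Second, and more seriously, your lower-tail split at $w_n=\log n$ fails: the estimate $F^n(b_n-ya_n)\le\exp\{-(1-\epsilon)(1+\epsilon y)^{1/\epsilon}\}$ is valid only while $b_n-ya_n\ge t_0$, i.e.\ $y\le(b_n-t_0)/a_n$, and for $\bar{F}(x)=\exp(-x^{\beta})$ with $0<\beta<1$ one has $(b_n-t_0)/a_n\sim\beta\log n<\log n$, so your ``moderate range'' $[v,w_n]$ exits the region where the bound holds. The delicate calibration you flag is in fact unnecessary: split at $y_n^{*}=(b_n-t_0)/a_n$ with $t_0$ a \emph{fixed} threshold. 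For $y>y_n^{*}$ the event $\{M_n<b_n-ya_n\}$ is contained in $\{M_n<t_0\}$, and independence gives $E\,|M_n-b_n|^kI_{\{M_n<t_0\}}\le F^{n-1}(t_0)\,E\bigl[(b_n-X_1)^kI_{\{X_1<t_0\}}\bigr]\le C\,F^{n-1}(t_0)\bigl(b_n^k+E|X_1|^kI_{\{X_1\le0\}}\bigr)$; since $F(t_0)<1$ this decays geometrically in $n$, while $r(F)=\infty$ with $F\in\D(\Lambda)$ forces $\bar{F}$ to be rapidly varying, hence $b_n=o(n^{\delta})$ for every $\delta>0$, and $a_n=u(b_n)$ is slowly varying in $n$ (Theorem \ref{a_rv}), so dividing by $a_n^k$ still yields $0$. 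With these two repairs your argument becomes a correct rendition of the classical proof.
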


\begin{thm}(Proposition 1.15 and 1.16, Resnick (1987)) \label{thm_von}
	\item[(i)] Suppose that df $F$ is absolutely continuous with positive density $f$ in some neighborhood of $\infty.$ \\
	\item[(a)] If for some $\alpha>0$
	\begin{eqnarray}\label{von_F}
	\lim_{x\to\infty}\dfrac{xf(x)}{\overline{F}(x)}=\alpha
	\end{eqnarray}
	then $F\in\D(\Phi_\alpha).$\\
	\item[(b)] If $f$ is nonincreasing and $F\in\D(\Phi_\alpha)$ then (\ref{von_F}) holds.
	\item[(ii)] Suppose $F$ has finite right endpoint $r(F)$ and is absolutely continuous in a left neighborhood of $r(F)$ with positive density $f.$  \\
	\item[(a)] If for some $\alpha>0$
	\begin{eqnarray}\label{von_W}
	\lim_{x\to r(F)}\dfrac{(r(F)-x)f(x)}{\overline{F}(x)}=\alpha
	\end{eqnarray}
	then $F\in\D(\Psi_\alpha).$\\
	\item[(b)] If $f$ is nonincreasing and $F\in\D(\Psi_\alpha)$ then (\ref{von_W}) holds.
\end{thm}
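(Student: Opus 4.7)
My plan is to establish the von Mises-type characterizations in parts (i) and (ii) by reducing the Weibull case to the Fr\'echet case and invoking two classical tools: the Karamata representation of regularly varying functions and Karamata's monotone density theorem. The backbone of the reduction is the well-known equivalence $F \in \D(\Phi_\alpha) \iff \bar{F} \in RV_{-\alpha}$ at $\infty$, together with $F \in \D(\Psi_\alpha) \iff r(F) < \infty$ and the tail $y \mapsto \bar{F}(r(F) - 1/y)$ belongs to $RV_{-\alpha}$ at $\infty$.

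For (i)(a), I would set $H(x) = -\log \bar{F}(x)$, which is well defined for $x$ large enough since $f > 0$ near $\infty$ forces $\bar{F} > 0$ there, and rewrite the hypothesis as $xH'(x) \to \alpha$. Substituting $u = \log x$ and $\phi(u) = H(e^u)$ turns this into $\phi'(u) \to \alpha$, so by elementary integration $\phi(u) = \alpha u + o(u)$ as $u \to \infty$. Exponentiating yields $\bar{F}(x) = x^{-\alpha} L(x)$ with $L$ slowly varying (this can also be read off the Karamata representation in Remark \ref{kara1}), and hence $F \in \D(\Phi_\alpha)$. For (i)(b), I would observe that $F \in \D(\Phi_\alpha)$ gives $\bar{F} \in RV_{-\alpha}$, and since $\bar{F}(x) = \int_x^\infty f(t)\,dt$ with $f$ ultimately nonincreasing, Karamata's monotone density theorem yields $xf(x) \sim \alpha \bar{F}(x)$, which is exactly (\ref{von_F}).

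For part (ii), I would apply the change of variable $Y = 1/(r(F) - X)$, so that $\bar{F}_Y(y) = \bar{F}(r(F) - 1/y)$ and $f_Y(y) = f(r(F) - 1/y)/y^2$. A direct computation with the substitution $x = r(F) - 1/y$ shows $y f_Y(y)/\bar{F}_Y(y) = (r(F) - x) f(x)/\bar{F}(x)$, so the hypothesis in (ii)(a) transfers to the hypothesis in (i)(a) for $F_Y$, and the conclusion $F_Y \in \D(\Phi_\alpha)$ is equivalent to $F \in \D(\Psi_\alpha)$. For (ii)(b), I would note that $f_Y$ is ultimately nonincreasing whenever $f$ is nonincreasing in a left neighborhood of $r(F)$: indeed $y \mapsto f(r(F) - 1/y)$ is nonincreasing and $y^2$ is increasing, so the quotient is nonincreasing. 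Thus (i)(b) applied to $F_Y$ delivers (\ref{von_W}).

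The main obstacle is the correct invocation of Karamata's monotone density theorem, whose precise hypotheses (ultimate monotonicity of $f$, together with the representation $\bar{F}(x) = \int_x^\infty f$) must match the setting; the absolute continuity and nonincreasing density assumed in the theorem are exactly what is needed, but stating and verifying this cleanly is the only non-mechanical step. A secondary technical point is verifying that the transformation $Y = 1/(r(F) - X)$ really maps $\D(\Psi_\alpha)$ bijectively onto $\D(\Phi_\alpha)$, but this follows directly from the defining limit (\ref{Introduction_e1}) under the substitution $x = r(F) - 1/y$. With these two ingredients in place, the exponentiation step for (i)(a) and the chain-rule transfer of monotonicity for (ii) are both routine.
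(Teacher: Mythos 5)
The paper itself offers no proof of this statement: it is imported verbatim into the appendix from Resnick (1987, Propositions 1.15 and 1.16), so there is no in-paper argument to compare against, and your proposal must stand on its own. It does: your route is the classical one, and every step checks out. Part (i)(a) via the logarithmic derivative, (i)(b) via Karamata's monotone density theorem applied to $\bar{F}(x)=\int_x^\infty f(t)\,dt\in RV_{-\alpha}$, and part (ii) by transporting through $Y=1/(r(F)-X)$ are all sound; in particular the identity $yf_Y(y)/\bar{F}_Y(y)=(r(F)-x)f(x)/\bar{F}(x)$ under $x=r(F)-1/y$ is correct, the equivalence $F\in\D(\Psi_\alpha)\iff \bar{F}(r(F)-1/y)\in RV_{-\alpha}$ is exactly Resnick's Proposition 1.13 (which the paper itself uses in its proof of Theorem \ref{TPhi_order}, Case (b)), and your monotonicity transfer for $f_Y(y)=f(r(F)-1/y)/y^2$ (nonincreasing numerator over increasing positive denominator) is valid for $y$ large enough that $r(F)-1/y$ lies in the left neighborhood where $f$ is nonincreasing. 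One step should be tightened before this could be called complete: in (i)(a), the estimate $\phi(u)=\alpha u+o(u)$ obtained by integrating $\phi'(u)\to\alpha$ does \emph{not} by itself imply that $L$ is slowly varying — it only gives $\log\bar{F}(x)\sim-\alpha\log x$, which is strictly weaker than $\bar{F}\in RV_{-\alpha}$. The valid argument is the one you relegate to a parenthetical: write
\begin{eqnarray}
\bar{F}(x)=\bar{F}(x_0)\exp\Big\{-\int_{x_0}^{x}\frac{\rho(t)}{t}\,dt\Big\},\qquad \rho(t)=\frac{tf(t)}{\bar{F}(t)}\to\alpha,\nonumber
\end{eqnarray}
which is precisely the representation of Remark \ref{kara1} and yields $\bar{F}\in RV_{-\alpha}$ directly (equivalently, deduce $\phi(u+c)-\phi(u)=\int_u^{u+c}\phi'\to\alpha c$ from $\phi'\to\alpha$ rather than using the $o(u)$ bound). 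Promote that to the primary argument and the proof is in order, matching the standard treatment in Resnick.
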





\begin{thm} (Proposition 1.17, Resnick (1987)) Let  $F$ be absolutely continuous in a left neighborhood of $r(F)$ with density $f.$ If 
	\begin{eqnarray} \label{lem3_appendix} \lim_{x\uparrow r(F)}f(x)\int_{x}^{r(F)}\overline{F}(t)dt/\overline{F}(x)^2=1,\end{eqnarray} 
	then $F\in\D(\Lambda).$ In this case we may take,
	\[u(t)=\int_{x}^{r(F)}\overline{F}(s)ds/\overline{F}(t), \;\;b_n=F^{\leftarrow}(1-1/n),\;\;a_n=u(b_n).\]
\end{thm}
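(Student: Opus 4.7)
The plan is to verify the defining characterization of $\D(\Lambda)$ by means of the auxiliary function $u$ given in the statement. Recall that $F \in \D(\Lambda)$ iff there is a positive function $u$ on a left neighborhood of $r(F)$ such that
$$\lim_{t\uparrow r(F)} \frac{\overline{F}(t+xu(t))}{\overline{F}(t)} = e^{-x}, \quad x \in \Real,$$
and under this convergence one may take $b_n = F^{\leftarrow}(1-1/n)$ and $a_n = u(b_n)$. So everything reduces to verifying this (locally uniform in $x$) limit for $u(t) = \int_t^{r(F)}\overline{F}(s)\,ds/\overline{F}(t)$.

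First I would compute $u'$ directly from the quotient rule, which yields
$$u'(x) = -1 + \frac{f(x)\int_x^{r(F)}\overline{F}(s)\,ds}{\overline{F}(x)^2},$$
so the hypothesis says exactly that $u'(x) \to 0$ as $x \uparrow r(F)$. Combining this with the mean value theorem delivers the self-neglecting property
$$\lim_{t\uparrow r(F)} \frac{u(t+yu(t))}{u(t)} = 1$$
locally uniformly in $y \in \Real$, since $u(t+yu(t)) - u(t) = yu(t)u'(\xi)$ for some $\xi$ between $t$ and $t+yu(t)$; a short preliminary step (again using $u'\to 0$) confirms that $t+yu(t) \uparrow r(F)$ as $t\uparrow r(F)$, so $u'(\xi) \to 0$.

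Next, writing the hazard rate $h(s) = f(s)/\overline{F}(s)$, the hypothesis rewrites as $h(s)u(s) \to 1$ as $s \uparrow r(F)$. Then for fixed $x \in \Real$,
$$-\log\frac{\overline{F}(t+xu(t))}{\overline{F}(t)} = \int_t^{t+xu(t)} h(s)\,ds = \int_0^x h(t+yu(t))\, u(t)\,dy$$
by the change of variable $s = t + yu(t)$. Decompose the integrand as $\bigl(h(t+yu(t))\,u(t+yu(t))\bigr)\cdot\bigl(u(t)/u(t+yu(t))\bigr)$; both factors tend to $1$ locally uniformly in $y$ (the first by hypothesis, the second by the self-neglecting property), hence the integral tends to $x$ and $\overline{F}(t+xu(t))/\overline{F}(t) \to e^{-x}$, as required.

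Finally, with $b_n = F^{\leftarrow}(1-1/n)$ one has $n\overline{F}(b_n) \to 1$, and setting $a_n = u(b_n)$ the displayed convergence yields $n\overline{F}(a_nx+b_n) \to e^{-x}$, which by (\ref{Introduction_e1.2}) is equivalent to $F^n(a_nx+b_n) \to \Lambda(x)$. The main obstacle is handling the passage to the limit inside the integral rigorously: one needs both the self-neglecting property of $u$ and a uniform-in-$y$ version of $h(s)u(s) \to 1$ on compact $y$-intervals, which together justify exchanging limit and integral on $[0,x]$ (or $[x,0]$ when $x<0$).
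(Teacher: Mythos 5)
The paper contains no proof of this statement: it is reproduced in the appendix as a quotation of Proposition 1.17 of Resnick (1987) and used as a black box (e.g., through Theorem \ref{gn_conv} in the proof of the main theorem). Measured against the classical argument, your proposal is essentially the standard proof and is correct in outline. The computation $u'(x)=-1+f(x)\int_x^{r(F)}\overline{F}(s)\,ds/\overline{F}(x)^2$ is right, so the hypothesis (\ref{lem3_appendix}) is exactly $u'\to 0$; rewriting it as $h(s)u(s)\to 1$ for the hazard rate $h=f/\overline{F}$ is the von Mises reformulation; and the factorization of $\int_0^x h(t+yu(t))u(t)\,dy$ into $\bigl(h(t+yu(t))u(t+yu(t))\bigr)\cdot\bigl(u(t)/u(t+yu(t))\bigr)$, with the first factor handled by the hypothesis and the second by the self-neglecting property, is precisely how the sufficiency of the condition is established in the literature. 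Note also that the uniformity in $y$ that you flag as ``the main obstacle'' is in fact automatic for the first factor: all arguments $t+yu(t)$ lie in $[t,r(F))$, and $hu$ has a limit at $r(F)$.

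Two points do need tightening. First, $f$ is merely a density, so $u$ is absolutely continuous rather than $C^1$; replace the mean value theorem by $u(t+yu(t))-u(t)=\int_t^{t+yu(t)}u'(s)\,ds$, bounded in absolute value by $|y|\,u(t)\sup_{s>t}|u'(s)|$, which yields the self-neglecting property uniformly on compact $y$-sets. Second, your ``short preliminary step'' that $t+yu(t)\uparrow r(F)$ conceals the one genuinely delicate case: for $y>0$ and $r(F)<\infty$ you must show $t+yu(t)<r(F)$ eventually, since otherwise $\overline{F}(t+yu(t))=0$ and the hazard integral is meaningless. This follows from the elementary bound $u(t)\le r(F)-t$ (because $\int_t^{r(F)}\overline{F}(s)\,ds\le \overline{F}(t)(r(F)-t)$) combined with $u'\to 0$: if $u(t)\ge (r(F)-t)/y$ for some $t$ beyond the point where $|u'|\le \epsilon<1/(2y)$, then $u(s)\ge (r(F)-t)(1/y-\epsilon)\ge (r(F)-t)/(2y)$ for all $s\in(t,r(F))$, which contradicts $u(s)\le r(F)-s\to 0$ as $s\uparrow r(F)$ (the case $r(F)=\infty$ is covered by your observation that $u'\to 0$ forces $u(t)=o(t)$). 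Your final step is fine: absolute continuity near $r(F)$ gives $F(b_n)=1-1/n$ exactly for large $n$, hence $n\overline{F}(b_n+xu(b_n))\to e^{-x}$ and, via (\ref{Introduction_e1.2}), $F^n(a_nx+b_n)\to\Lambda(x)$. One last remark: the paper's statement has a typo inherited from transcription, $u(t)=\int_x^{r(F)}\overline{F}(s)\,ds/\overline{F}(t)$ should read $\int_t^{r(F)}$, as your proof implicitly assumes.
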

\begin{thm}\label{lem4_appendix}(Lemma 2, De Haan and Resnick (1982))
	Suppose $F\in \D(\Lambda)$ with auxiliary function $u$ and $\epsilon>0.$ There exists a $t_0$ such that for $x\geq 0,$ $t\geq t_0$
	\begin{eqnarray}
	(1-\epsilon)\left[\frac{-\log F(t)}{-\log F(t+xu(t))}\right]^{-\epsilon}\leq \frac{u(t+xu(t))}{u(t)}\leq (1+\epsilon)\left[\frac{-\log F(t)}{-\log F(t+xu(t))}\right]^{\epsilon},\nonumber
	\end{eqnarray}
	and for $x<0,$ $t+xu(t)\geq t_0$
	\begin{eqnarray}
	(1-\epsilon)\left[\frac{-\log F(t+xu(t))}{-\log F(t)}\right]^{-\epsilon}\leq \frac{u(t+xu(t))}{u(t)}\leq (1+\epsilon)\left[\frac{-\log F(t+xu(t))}{-\log F(t)}\right]^{\epsilon}.\nonumber
	\end{eqnarray}
\end{thm}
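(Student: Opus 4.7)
By Lemma \ref{Lemma1} and the decomposition (\ref{e0})--(\ref{1.5}), the task reduces to showing, in each case, that
\[
\lim_{n\to\infty} I_2(n) = \log\Gamma(k) + k + (k-1)\Bigl(\gamma - \sum_{i=1}^{k-1}\tfrac{1}{i}\Bigr) - h(g^{(k)}),
\]
where $g^{(k)}$ is the relevant $k^{th}$-extreme limit density of Lemma \ref{TPhi_orderEnt}. Fix $v>0$ large and a cutoff $\eta(v)\to l(G)$ as $v\to\infty$ (take $\eta(v)=v^{-1}$ for Fr\'echet and $\eta(v)=-v$ for Gumbel), and split
\[
-I_2(n) \;=\; I_A(n,v) + I_B(n,v) + I_C(n,v)
\]
according to integration over $(-\infty,\eta(v))$, $(\eta(v),v)$, and $(v,\infty)$ respectively. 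I will show that $I_A$ and $I_C$ vanish under the iterated limit $\lim_{v\to\infty}\lim_{n\to\infty}$ and evaluate $\lim I_B$ in closed form.

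For the left tail I subdivide $I_A = I_{A_1}+I_{A_2}$ at the critical point $t_n=(\xi_n-b_n)/a_n$ defined by $-\log F(\xi_n)=n^{-1/2}$, which satisfies $t_n\to l(G)$. On $(-\infty,t_n)$ the mass factor $F^{n-k}(\cdot)(n\bar F(\cdot))^{k-1}\le n^{k-1}\exp\{-(n-k)n^{-1/2}\}$ decays exponentially in $\sqrt n$, while by hypothesis $h(f)<\infty$ and by Remark \ref{kara1} and Lemma \ref{slow} the normalizer $\log a_n$ grows at most polynomially; this kills $I_{A_1}$. On $(t_n,\eta(v))$, the locally uniform convergence of Lemma \ref{Lemma.5} combined with integrable dominating functions $\kappa$---built for Fr\'echet from the von Mises relation (\ref{von_F}) and Theorem \ref{rv}, and for Gumbel by iterating Theorem \ref{lem4_appendix} on the auxiliary function $u$---yields $I_{A_2}\to 0$ via DCT and the integrability of $g^{(k)}\log g^{(k)}$ near $l(G)$ guaranteed by Lemma \ref{TPhi_orderEnt}. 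For the right tail $I_C$, I write $a_nf(a_nx+b_n) = L(n)\,\bar F(a_n+b_n)^{-1}\,f(a_nx+b_n)$ with $L(n)=a_nf(a_n+b_n)\,\bar F(a_n+b_n)^{-1}\cdot n\bar F(a_n+b_n)$; Theorem \ref{thm_von} gives $L(n)\to\alpha$ (Fr\'echet) or $\to 1$ (Gumbel), while the monotonicity of $f$ confines $f(a_nx+b_n)/f(a_n+b_n)\in(\epsilon,1]$ for $x\ge v\ge 1$, producing a two-sided sandwich that vanishes as $(1-F^n(a_nv+b_n))\to -\log G(v)\to 0$.

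For the middle piece, using (\ref{Introduction_e2}) I rewrite $\log(a_nf(a_nx+b_n))$ as $\log g_n^{(k)}(x) + \log(n^{k-1}B(n,k)) - \log\bigl(F^{n-k}(a_nx+b_n)(n\bar F(a_nx+b_n))^{k-1}\bigr)$, so that by Lemma \ref{Lemma.5} and DCT (with the same dominating functions),
\[
\lim_{v\to\infty}\lim_{n\to\infty} I_B(n,v) = -h(g^{(k)}) - \log\Gamma(k) - \int \log\bigl(G(x)(-\log G(x))^{k-1}\bigr)\,g^{(k)}(x)\,dx,
\]
and the last integral evaluates to $-k-(k-1)\bigl(\gamma-\sum_{i=1}^{k-1}1/i\bigr)$ by the substitution $u=-\log G(x)$ against the Gamma$(k)$ kernel, matching the target constants exactly. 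Case (ii) reduces to case (i) via the inversion $Z_i=1/(r(F)-X_i)$, which maps $\D(\Psi_\alpha)$ to $\D(\Phi_\alpha)$; the Jacobian produces the correction $-2\,E(\log\vee_{i\le n-k+1}a_nZ_i\cdot I_{\{\cdot\}})$, controlled by applying Theorem \ref{Moment}(iii) to $Y_i=\alpha\log Z_i\in\D(\Lambda)$, whose mean under $\lambda^{(k)}$ equals $-(-\gamma+\sum_{i=1}^{k-1}1/i)$, and using that the hypothesis $r(F)<\infty$ plus Jensen's inequality guarantee $E(|Y_1|I_{\{Y_1\le 0\}})<\infty$ so the moment-convergence result applies. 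The main obstacle is the construction of the $n$-uniform dominating function $\kappa$ near the upper endpoint---especially in the Gumbel case, where the Potter-type bounds on $u$ must be applied separately on $\{x<0\}$ and $\{x>0\}$---together with the delicate bookkeeping of $I_{A_1}$, where the von Mises asymptotics are unavailable and one must rely entirely on $h(f)<\infty$ versus the polynomial growth of $a_n$.
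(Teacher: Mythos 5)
Your proposal does not prove the statement in question. Theorem \ref{lem4_appendix} is a Potter-type bound on the auxiliary function $u$ of a distribution $F\in\D(\Lambda)$: it asserts that $u(t+xu(t))/u(t)$ is sandwiched between $(1\mp\epsilon)$ times powers of the ratio $\frac{-\log F(t)}{-\log F(t+xu(t))}$, uniformly for $t$ (respectively $t+xu(t)$) beyond some threshold $t_0$. This is a self-contained analytic fact about the Gumbel domain of attraction, quoted in the paper from De Haan and Resnick (1982, Lemma 2) and stated without proof in the appendix. What you have written instead is a proof sketch of the paper's main result, Theorem \ref{TPhi_order} (the entropy convergence $\lim_n h(g_n^{(k)})=h(g^{(k)})$ via the decomposition $I_A+I_B+I_C$, the critical point $t_n$, dominating functions $\kappa$, and the inversion $Z_i=1/(r(F)-X_i)$ for the Weibull case). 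None of that argument establishes, or even addresses, the two displayed inequalities for $u(t+xu(t))/u(t)$. Worse, your sketch is circular as a proof of this statement: you explicitly invoke ``iterating Theorem \ref{lem4_appendix} on the auxiliary function $u$'' to build the Gumbel-case dominating function, i.e., you use the very result you are supposed to be proving.

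A genuine proof would have to work directly with the structure of $\D(\Lambda)$. The natural route, consistent with the tools already in the appendix, starts from the Balkema--de Haan representation (Theorem \ref{Lem5_appendix}): $\bar{F}(x)=c(x)\exp\bigl\{-\int_{-\infty}^{x}\frac{dt}{u(t)}\bigr\}$ with $c(x)\to 1$ and $u$ differentiable with $u'(x)\to 0$ as $x\to r(F)$. From $u'\to 0$ one gets, for $s$ between $t$ and $t+xu(t)$ with both beyond $t_0$, the bound $\abs{u(s)-u(t)}\leq \epsilon\abs{s-t}$ (self-neglecting behaviour of $u$), and the ratio $\frac{-\log F(t)}{-\log F(t+xu(t))}$ is controlled through $-\log F(t)\sim \bar{F}(t)$ as $t\to r(F)$ together with $\frac{\bar{F}(t)}{\bar{F}(t+xu(t))}=\frac{c(t)}{c(t+xu(t))}\exp\bigl\{\int_{t}^{t+xu(t)}\frac{ds}{u(s)}\bigr\}$; a Gronwall-type comparison of $\int_t^{t+xu(t)}\frac{ds}{u(s)}$ with $\log\frac{u(t+xu(t))}{u(t)}$ then delivers the exponent $\pm\epsilon$ in both displayed inequalities, with the $x<0$ case handled symmetrically by requiring $t+xu(t)\geq t_0$. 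Your proposal contains none of these ingredients, so as an attempt at this particular statement it is off target in its entirety, not merely incomplete.
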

\begin{thm}(Theorem 2.5, Resnick (1987))\label{gn_conv}
	Suppose that $F$ is absolutely continuous with pdf $f.$ If $F\in\D(G)$ and
	\begin{itemize}
		\item[(i)] $G=\Phi_\alpha,$ then $g_n(x)\to \phi_\alpha(x)$ locally uniformly on $(0,\infty)$ iff (\ref{von_F}) holds;
		\item[(ii)] $G=\Psi_\alpha,$ then $g_n(x)\to \psi_\alpha(x)$ locally uniformly on $(-\infty,0)$ iff (\ref{von_W}) holds;
		\item[(iii)] $G=\Lambda,$ then $g_n(x)\to \lambda(x)$ locally uniformly on $\Real$ iff (\ref{lem3_appendix}) holds.
	\end{itemize}
\end{thm}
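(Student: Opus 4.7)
The plan is to leverage identity (\ref{1.5}), which combined with Lemma \ref{Lemma1} reduces the problem to evaluating $\lim_{n\to\infty} I_2(n) := \int g_n^{(k)}(x)\log(a_n f(a_nx+b_n))\,dx$ in each case. I would split this integral into three regions $(-\infty, \eta(v)]$, $[\eta(v), v]$, $[v, \infty)$ where $\eta(v)\to l(G)$ as $v\to\infty$ (taking $\eta(v)=v^{-1}$ for Fr\'echet, $\eta(v)=-v$ for Gumbel), giving pieces $I_A(n,v),I_B(n,v),I_C(n,v)$. The target is to show that the two tails vanish in the iterated limit $\lim_{v\to\infty}\lim_{n\to\infty}$, while $I_B$ converges to an integral against $g^{(k)}$ which, after combining with the constant from Lemma \ref{Lemma1} and using Lemma \ref{TPhi_orderEnt}, produces $h(g^{(k)})$.

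For case (i), $G=\Phi_\alpha$, the left tail $I_A(n,v)$ needs a further split at the critical quantile $\xi_n$ defined by $-\log F(\xi_n)\simeq n^{-1/2}$. On $(-\infty,\xi_n]$ I would bound $F^{n-k}(s)(n\bar F(s))^{k-1}\le n^{k-1} F^{n-k}(\xi_n)$, reducing $I_{A_1}(n)$ to a multiple of $\log(a_n)-h(f)$ times $F^{n-k}(\xi_n)/B(n,k)$; since the hypothesis $h(f)<\infty$ is assumed and Karamata's representation (Remark \ref{kara1}) forces $\log a_n$ to grow only polynomially, the super-polynomial factor $\exp(-(n-k)n^{-1/2})$ crushes it. Past $\xi_n$ I would construct an $n$-free envelope for $g_n^{(k)}$: the von Mises condition in Theorem \ref{thm_von}(i)(b) gives $a_n x f(a_nx)\le -(\alpha+\epsilon_1)\log F(a_nx)$, a Potter bound from Theorem \ref{rv} controls $n\bar F(a_nx)$, yielding a Fr\'echet-shaped majorant $\kappa(x)\propto x^{-k\alpha'-1}\exp(-cx^{-\alpha'})$ for which $\log\kappa\cdot\kappa$ is integrable. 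DCT together with the local uniform convergence from Lemma \ref{Lemma.5} then handles $I_B(n,v)$ and gives its limit as $h(\phi_\alpha^{(k)})-\log\Gamma(k)-k-(k-1)(\gamma-\sum_{i=1}^{k-1} 1/i)$. For $I_C(n,v)$ I would factor $\log(a_n f(a_nx+b_n))=\log(a_n f(a_n)/\bar F(a_n))+\log(f(a_nx+b_n)/f(a_n))+\log\bar F(a_n)$; monotonicity of $f$ gives the middle ratio in $[\epsilon,1]$, Theorem \ref{thm_von}(i)(b) sends $a_n f(a_n)/\bar F(a_n)\cdot n\bar F(a_n)\to\alpha$, and $1-F^n(a_nv+b_n)\to 1-\Phi_\alpha(v)\to 0$ in the outer limit.

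For case (iii), $G=\Lambda$, the same split works but the envelope needs the auxiliary function $u$ from Theorem \ref{lem3_appendix}, together with the two-sided Potter-type bound of Theorem \ref{lem4_appendix} comparing $u(a_nx+b_n)/u(b_n)$ to $-\log F$-ratios, and the estimate $n\bar F(a_nx+b_n)\lesssim (1+\xi|x|)^{1/\xi}$ (handled separately for $x<0$ and $x>0$); the result is a Weibull-shaped majorant $\kappa(x)=c(1+\xi|x|)^{1/\xi-1}\exp(-c(1+\xi|x|)^{1/\xi})$ with integrable $\log\kappa\cdot\kappa$. The $I_C$ analysis is identical except that Theorem \ref{thm_von} now gives $a_n f(a_n)/\bar F(a_n)\to 1$.

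For case (ii), $G=\Psi_\alpha$, I would use the standard inversion $Z_i=1/(r(F)-X_i)$, which is iid with $F_Z\in\D(\Phi_\alpha)$. Expressing the entropy of $g_n^{(k)}$ via a change of variable produces $h(g_n^{(k)})=h(\tilde g_n^{(k)})-2E(\log(\vee_{i=1}^{n-k+1} Z_i a_n) I_{\{\epsilon_n\le \vee Z_i a_n\}})$, where the transformed density $\tilde g_n^{(k)}$ is now in the Fr\'echet regime, so case (i) applies to give $h(\tilde g_n^{(k)})\to h(\phi_\alpha^{(k)})$. Setting $Y_i=\alpha\log Z_i\in\D(\Lambda)$ and using the moment convergence Theorem \ref{Moment}(iii), controlled on the truncation by Jensen applied to $E(|Y_1|I_{\{Y_1\le 0\}})$, the correction converges to $\frac{2}{\alpha}(-\gamma+\sum_{i=1}^{k-1} 1/i)$; adding this to $h(\phi_\alpha^{(k)})$ yields $h(\psi_\alpha^{(k)})$ by direct comparison of the formulas in Lemma \ref{TPhi_orderEnt}.

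The main obstacle will be the construction of the dominating functions that work uniformly in $n$ and have finite $\int\log\kappa\cdot\kappa$. The Gumbel case is particularly delicate because the two-sided Potter bounds force distinct envelopes on $\{x<0\}$ and $\{x>0\}$; patching these into a single Weibull-type majorant and verifying integrability of $\kappa\log\kappa$ is the technically heaviest step. A secondary obstacle is that the $I_{A_1}$ argument relies only on $h(f)$ being finite rather than on stronger moment hypotheses, which is why the $n^{-1/2}$-quantile $\xi_n$ is the right choice: any coarser cutoff would fail to beat the Karamata growth of $\log a_n$.
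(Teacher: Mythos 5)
There is a fundamental mismatch here: the statement you were asked to prove is Theorem \ref{gn_conv}, the classical local uniform convergence of the normalized maximum's density $g_n(x)=na_nf(a_nx+b_n)F^{n-1}(a_nx+b_n)$ to $\phi_\alpha$, $\psi_\alpha$, or $\lambda$, \emph{if and only if} the corresponding von Mises condition (\ref{von_F}), (\ref{von_W}), or (\ref{lem3_appendix}) holds. This is an external result quoted from Resnick (1987, Theorem 2.5); the paper gives no proof of it, and instead uses it as the key ingredient in the proof of Lemma \ref{Lemma.5}. What you have written is not a proof of this statement at all: it is an outline (a close paraphrase, in fact) of the paper's Section 3 proof of the main entropy theorem, Theorem \ref{TPhi_order} --- the decomposition into $I_A,I_B,I_C$, the $n^{-1/2}$-quantile $\xi_n$, the Fr\'echet and Weibull-shaped majorants, and the inversion $Z_i=1/(r(F)-X_i)$ for the Weibull case all belong to that argument, not to the density convergence theorem.

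As a purported proof of Theorem \ref{gn_conv} your argument fails in three concrete ways. First, it proves the wrong kind of convergence: $\lim_n h(g_n^{(k)})=h(g^{(k)})$ is an integral (entropy) statement and neither implies nor is equivalent to locally uniform convergence of the densities $g_n$. Second, it is circular: you explicitly invoke ``the local uniform convergence from Lemma \ref{Lemma.5}'' inside your DCT steps, but Lemma \ref{Lemma.5} is proved in the paper precisely by citing Theorem \ref{gn_conv}, i.e.\ you assume the statement to be proved. Third, the theorem is an equivalence, and your proposal only ever moves in the ``von Mises condition implies convergence'' direction; the necessity half (that locally uniform convergence of $g_n$ forces (\ref{von_F}), (\ref{von_W}), or (\ref{lem3_appendix})) is never addressed. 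A genuine proof, following Resnick, proceeds directly from the factorization (in the Fr\'echet case, with $b_n=0$) $g_n(x)=\bigl(a_nxf(a_nx)/\bar F(a_nx)\bigr)\cdot\bigl(n\bar F(a_nx)/x\bigr)\cdot F^{n-1}(a_nx)$: the last two factors converge locally uniformly on $(0,\infty)$ to $x^{-\alpha-1}$ and $e^{-x^{-\alpha}}$ by the uniform convergence theorem for regularly varying functions and (\ref{Introduction_e1.2}), so $g_n\to\phi_\alpha$ locally uniformly if and only if the von Mises ratio tends to $\alpha$; the Weibull and Gumbel cases use the analogous factorizations through $(r(F)-x)f(x)/\bar F(x)$ and the auxiliary function $u$.
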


\begin{thm}\label{Lem5_appendix}(Corollary, Balkema and De Haan (1972))
	A distribution function $F\in\D(\Lambda)$ if and only if there exist a positive function $c$ satisfying $\lim_{x\to r(F)}c(x)=1$ and a positive differentiable function $u(t)$ satisfying $\lim_{x\to r(F)}u'(x)=0$ such that
	\[\bar{F}(x)=c(x)\exp\Big\{-\int_{-\infty}^{x}\frac{dt}{u(t)}\Big\}\;\;\text{for }x<r(F).\]
\end{thm}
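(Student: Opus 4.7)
The plan is to prove both implications by invoking the equivalence between $F\in\D(\Lambda)$ and the $\Pi$-variation of $V:=-\log\bar{F}$ with auxiliary function $u$, namely that $V(x+yu(x))-V(x)\to y$ as $x\to r(F)$ for every $y\in\Real$.

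For the sufficient direction, assume the representation. Taking logarithms of the ratio $\bar{F}(x+yu(x))/\bar{F}(x)$ gives
\begin{equation*}
V(x+yu(x)) - V(x) = \log\frac{c(x)}{c(x+yu(x))} + \int_x^{x+yu(x)}\frac{dt}{u(t)}.
\end{equation*}
The first term tends to zero because $c\to 1$ at $r(F)$. The substitution $t=x+su(x)$ rewrites the integral as $\int_0^y u(x)/u(x+su(x))\,ds$, and since $u'\to 0$ near $r(F)$, the mean value theorem gives $u(x+su(x))/u(x)=1+su'(\xi)\to 1$ uniformly on compact $s$-intervals. Dominated convergence then forces the integral to converge to $y$, so $V(x+yu(x))-V(x)\to y$ and $F\in\D(\Lambda)$.

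For the necessary direction, take the canonical auxiliary function $u(t)=\int_t^{r(F)}\bar{F}(s)\,ds/\bar{F}(t)$ appearing in the theorem preceding Theorem~\ref{lem4_appendix}. Assuming $F\in\D(\Lambda)$ and absolutely continuous with density $f$, differentiating the identity $u(x)\bar{F}(x)=\int_x^{r(F)}\bar{F}(s)\,ds$ yields $u'(x) = u(x)f(x)/\bar{F}(x) - 1$, and the von Mises relation $u(x)f(x)/\bar{F}(x)\to 1$ (equivalent to $F\in\D(\Lambda)$ under absolute continuity) gives $u'(x)\to 0$. Then, by the $\Pi$-variation representation theorem applied to $V$, there exist a constant $C_\infty$ and a function $C$ with $C(x)\to C_\infty$ such that
\begin{equation*}
V(x) = C(x) + \int_{x_0}^x\frac{dt}{u(t)}
\end{equation*}
for a fixed base point $x_0<r(F)$. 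Setting $c(x):=\exp(C_\infty-C(x))$ and absorbing the multiplicative constant $e^{-C_\infty}$ along with the shift of lower limit from $x_0$ to $-\infty$ into $c$, one recovers $\bar{F}(x)=c(x)\exp\{-\int_{-\infty}^x dt/u(t)\}$ with $c(x)\to 1$.

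The principal obstacle is the representation $V(x)=C(x)+\int_{x_0}^x dt/u(t)$ with $C$ converging to a finite limit, which is the content of the classical $\Pi$-variation representation theorem of de Haan (the result underlying Balkema--De Haan (1972)); its proof involves a smoothing and truncation argument not reducible to the cited propositions of Resnick. A secondary subtlety is that the improper lower limit $-\infty$ in the displayed integral need not converge literally; one works with a finite base point $x_0$ and absorbs the resulting boundary constant into $c$, a redefinition that preserves the limit $c(x)\to 1$ after normalization.
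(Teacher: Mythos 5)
The paper offers no proof of this statement --- it is quoted in the appendix as a corollary from Balkema and De Haan (1972) --- so there is no in-paper argument to compare against; your attempt must be judged against the classical proof. Your sufficiency half is sound in outline: with $V=-\log\bar{F}$, the $c$-term vanishes in the limit, the substitution $t=x+su(x)$ and the mean value theorem with $u'\to 0$ give $u(x+su(x))/u(x)\to 1$ locally uniformly in $s$, hence $V(x+yu(x))-V(x)\to y$, which is the standard characterization of $\D(\Lambda)$. You still owe the routine check that $x+su(x)$ stays below, and tends to, $r(F)$ before the mean value theorem applies --- i.e.\ the self-neglecting property $u(x)=o(r(F)-x)$ when $r(F)<\infty$ --- but that is minor.

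The necessity half has a genuine gap: two false steps plus a deferred core. First, membership in $\D(\Lambda)$ does not supply a density, so assuming $F$ absolutely continuous already proves less than the theorem asserts. Second, the claimed equivalence ``$u(x)f(x)/\bar{F}(x)\to 1$ iff $F\in\D(\Lambda)$ under absolute continuity'' is false: the von Mises condition is sufficient but not necessary even among absolutely continuous dfs (the paper's quoted Proposition 1.17 of Resnick, the theorem containing (\ref{lem3_appendix}), is deliberately one-directional, and the two-sided statements in Theorem \ref{thm_von} concern $\Phi_\alpha,\Psi_\alpha$ and require a nonincreasing density). An oscillating density tail-equivalent to the exponential keeps $F\in\D(\Lambda)$ while $u(x)f(x)/\bar{F}(x)$ fails to converge, so your $u'(x)\to 0$ is unproven. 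Third, and decisively, your representation $V(x)=C(x)+\int_{x_0}^{x}dt/u(t)$ with $C(x)\to C_\infty$ is refuted by direct computation for the canonical $u$: writing $I(t)=\int_t^{r(F)}\bar{F}(s)\,ds$, one has $1/u(t)=-\frac{d}{dt}\log I(t)$, hence exactly $\exp\big\{-\int_{x_0}^{x}dt/u(t)\big\}=I(x)/I(x_0)$ and therefore $C(x)=\log u(x)-\log I(x_0)$. Thus $C$ converges iff $u$ converges to a positive constant, which fails for the standard normal ($u(x)\sim 1/x\to 0$) and the lognormal ($u(x)\to\infty$), both in $\D(\Lambda)$. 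The classical proof circumvents exactly this: it shows $F\in\D(\Lambda)$ forces $\bar{F}(t)\int_t^{r(F)}I(s)\,ds\,/\,I(t)^2\to 1$, uses this to build a von Mises df tail-equivalent to $F$ whose (smoothed, twice-integrated) auxiliary function $\int_t^{r(F)}I(s)\,ds\,/\,I(t)$ has derivative tending to $0$, and absorbs the tail-equivalence factor into $c$. That construction is precisely the step you acknowledge as ``not reducible to the cited propositions'' and defer; deferring it means the necessity direction assumes the substance of the theorem rather than proving it.
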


\begin{lem}(Lemma A.2, Saeb (2023) )\label{G_Lemma6}
	Suppose $F\in\D(\Lambda)$ with auxiliary function $u$ and $\epsilon>0.$ There exists a large $N$ such that for $x\geq 0,$ and $n>N$
	\begin{eqnarray}
	n(1-F(a_nx+b_n))\leq(1+\epsilon)^2(1+\epsilon x)^{\epsilon^{-1}},\nonumber
	\end{eqnarray}
	and for $x<0,$
	\begin{eqnarray}
	n(1-F(a_nx+b_n))\leq(1-\epsilon)^2(1+\epsilon \abs{x})^{\epsilon^{-1}}.\nonumber
	\end{eqnarray}
\end{lem}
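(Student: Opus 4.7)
The plan is to split according to the sign of $x$. For $x \geq 0$ the bound is immediate from monotonicity of $\bar F$ and the fact that $n\bar F(b_n) \to 1$. For $x < 0$ the argument rests on the von Mises (Balkema--De Haan) representation of Theorem \ref{Lem5_appendix} coupled with the Potter-type bound for the auxiliary function in Theorem \ref{lem4_appendix}, so that the $\Gamma$-variation of $\bar F$ translates into an explicit integral estimate.

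For $x \geq 0$, monotonicity of $\bar F$ yields $n\bar F(a_n x+b_n)\leq n\bar F(b_n)$. Since $b_n=F^{\leftarrow}(1-1/n)$ we have $n\bar F(b_n)\leq 1$ and thus $n\bar F(b_n)\leq (1+\epsilon)^2$ for $n$ large; combined with $(1+\epsilon x)^{1/\epsilon}\geq 1$ this gives the claim. For $x<0$ I would apply the representation $\bar F(y)=c(y)\exp\{-\int^{y} dt/u(t)\}$ (with $c(y)\to 1$) to obtain
\[\frac{\bar F(b_n+xu(b_n))}{\bar F(b_n)}=\frac{c(b_n+xu(b_n))}{c(b_n)}\exp\!\left\{\int_{x}^{0}\frac{u(b_n)}{u(b_n+\tau u(b_n))}\,d\tau\right\}\]
after the change of variable $t=b_n+\tau u(b_n)$ (and flipping limits since $x<0$). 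Multiplying by $n\bar F(b_n)$, the first two factors tend to $1$ and together contribute at most $(1+\epsilon)^2$ for large $n$; the remaining work is to bound the exponential.

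For the integrand, Theorem \ref{lem4_appendix} with some $\delta>0$ gives, for $\tau<0$ and $n$ large enough,
\[\frac{u(b_n)}{u(b_n+\tau u(b_n))}\leq \frac{1}{1-\delta}\left[\frac{-\log F(b_n+\tau u(b_n))}{-\log F(b_n)}\right]^{\delta}.\]
Since $F\in\D(\Lambda)$ the bracket tends to $e^{-\tau}$, and the strategy is to match $\delta$ to $\epsilon$ and control the bracket by a polynomial in $1+\epsilon|\tau|$, so that the integrand is at most $C/(1+\epsilon|\tau|)$; integration over $[x,0]$ then yields $\epsilon^{-1}\log(1+\epsilon|x|)+O(1)$, and exponentiation delivers the claimed $(1+\epsilon|x|)^{1/\epsilon}$, with residual constants absorbed into the prefactor $(1+\epsilon)^2$. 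The main obstacle is exactly this upgrade from the pointwise limit $e^{-\tau}$ of the bracket to a polynomial-in-$|\tau|$ bound uniform in $\tau$: a naive use of the pointwise limit gives a bound of order $e^{|x|}$ inside the exponential, which is strictly larger than $(1+\epsilon|x|)^{1/\epsilon}$. I would overcome this either by iterating Theorem \ref{lem4_appendix} along a dyadic partition of $[x,0]$, or by passing through the Potter bound for the $\Pi$-varying inverse $U(t)=F^{\leftarrow}(1-1/t)$ and pulling back via the relation $b_n+\tau u(b_n)=U(ne^{-\tau}(1+o(1)))$. Finally, the factor $(1-\epsilon)^2$ printed in the $x<0$ half of the statement is inconsistent with $n\bar F(b_n)\to 1$ at $\tau=0$ and should be read as $(1+\epsilon)^2$; this correction does not affect any downstream application in the proof of Theorem \ref{TPhi_order}.
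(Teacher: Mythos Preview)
This lemma is not proved in the present paper; it is merely quoted from Saeb (2023b), so there is no in-paper argument against which to compare yours.

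On the content of your attempt: the $x\geq 0$ case is correct and, as you note, trivial from monotonicity of $\bar F$ together with $n\bar F(b_n)\leq 1\leq (1+\epsilon)^2(1+\epsilon x)^{1/\epsilon}$.

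For $x<0$, however, the obstacle you isolate is not a technicality to be overcome---it is a signal that the printed inequality cannot hold in the stated direction. Since $n\bar F(a_nx+b_n)\to e^{-x}=e^{|x|}$ pointwise whenever $F\in\D(\Lambda)$, and since $\log(1+y)<y$ for $y>0$ forces $(1+\epsilon|x|)^{1/\epsilon}<e^{|x|}$ for every $|x|>0$, no choice of $N$ (and no constant prefactor, be it $(1-\epsilon)^2$ or $(1+\epsilon)^2$) can make $n\bar F(a_nx+b_n)\leq C\,(1+\epsilon|x|)^{1/\epsilon}$ hold for all $x<0$ and large $n$. Your proposed repairs---dyadic iteration of Theorem~\ref{lem4_appendix}, or a Potter bound on the $\Pi$-varying inverse---cannot rescue an inequality whose very limit violates it.

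The correct reading of the $x<0$ half is as a \emph{lower} bound,
\[
n\bar F(a_nx+b_n)\ \geq\ (1-\epsilon)^2(1+\epsilon|x|)^{1/\epsilon},
\]
which is consistent with the prefactor $(1-\epsilon)^2<1$, with $e^{|x|}>(1+\epsilon|x|)^{1/\epsilon}$, and with the way the lemma is actually consumed near \eqref{G_h}: there a lower bound on $-n\log F\sim n\bar F$ is precisely what is needed to dominate the factor $\exp\{(n-k)\log F(a_nx+b_n)\}$ in the construction of $\kappa(x)$. Your Balkema--de Haan representation combined with Theorem~\ref{lem4_appendix} is the right machinery; just reverse the target inequality for $x<0$ before carrying it through.
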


\begin{lem} (Lemma A. 3, Saeb (2023)) Let $X_1, X_2,\ldots$ are iid rvs with common df $F.$ Suppose $Z_i=1/(r(F)-X_i)$ with common df $F_Z$ and $Y_i=\alpha\log X_i$ with common df $F_Y.$
\begin{enumerate}
\item[(i)] \label{rel_1}  If $F\in\D(\Psi_\alpha)$ with norming constant $a_n>0$ and $b_n=r(F)$ then $F_Z\in\D(\Phi_\alpha)$ with $\tilde{a}_n=1/a_n$ and $\tilde{b}_n=0.$

\item[(ii)] \label{Lemma3.1}
	 If $F\in\D(\Phi_\alpha)$ with $a_n=F^{\leftarrow}\left(1-\frac{1}{n}\right)$ and $b_n=0$ then $F_Y\in \D(\Lambda)$ with $a_n^*=1$ and $b_n^*=\alpha\log a_n.$
\end{enumerate}
\end{lem}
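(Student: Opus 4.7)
The plan is to start from the decomposition $h(g_n^{(k)}) = -(I_1(n)+I_2(n))$ of equation (\ref{e0}) and invoke Lemma \ref{Lemma1} to reduce the problem to showing
$$\lim_{n\to\infty} I_2(n) \;=\; -\,h(g^{(k)}) + \log\Gamma(k) + k + (k-1)\Bigl(\gamma-\sum_{i=1}^{k-1}\tfrac{1}{i}\Bigr),$$
because the closed forms of $h(\phi_\alpha^{(k)})$, $h(\psi_\alpha^{(k)})$, $h(\lambda^{(k)})$ from Lemma \ref{TPhi_orderEnt} are built around exactly this constant. I would then split the range of integration at a slow cutoff $\eta(v)\to l(G)$ (as $v\to\infty$) and a large cutoff $v\to\infty$, writing $I_2(n) = I_A(n,v)+I_B(n,v)+I_C(n,v)$, and show separately that $I_A, I_C \to 0$ in the iterated limit $\lim_v\lim_n$, while $\lim_v\lim_n I_B(n,v)$ reproduces the required combination of $h(g^{(k)})$ and correction constants.

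For cases (i) Fr\'echet and (iii) Gumbel I would carry out a dominated-convergence argument on each piece. On the bulk interval $[\eta(v),v]$, local uniform convergence $g_n^{(k)}\to g^{(k)}$ from Lemma \ref{Lemma.5} combined with an explicit integrable majorant lets me pass to the limit in $I_B$: for $G=\Phi_\alpha$ the von Mises condition (Theorem \ref{thm_von}(i)(b)) and Theorem \ref{rv} produce a Fr\'echet-shaped dominator $\kappa(x) = c\alpha' x^{-k\alpha'-1}\exp(-cx^{-\alpha'})$, while for $G=\Lambda$ the auxiliary-function bounds of Theorem \ref{lem4_appendix} together with Lemma \ref{G_Lemma6} produce a Gumbel-shaped dominator $\kappa(x) = c(1+\xi|x|)^{1/\xi-1}\exp(-c(1+\xi|x|)^{1/\xi})$; in both cases $\int \kappa\log\kappa$ is finite. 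The left tail $I_A$ is split further at a ``critical point'' $t_n=(\xi_n-b_n)/a_n$ chosen by $-\log F(\xi_n)\sim n^{-1/2}$: on $(-\infty,t_n]$ a change of variable $a_nx+b_n=s$ and the estimate $F^{n-k}(\xi_n)\leq e^{-(n-k)n^{-1/2}}$ give a bound of the form $e^{-n^{1/2}}\,B(n,k)^{-1}\,(\log a_n - h(f))$, whose vanishing uses the finiteness of $h(f)$ together with the Karamata estimate (\ref{Fa_n})–(\ref{Fa_n2}) for $a_n$; on $(t_n,\eta(v)]$ the same dominators as in $I_B$ plus DCT kill the piece in the iterated limit. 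For the right tail $I_C$ I would divide and multiply by $f(a_n)$ and $\bar F(a_n)$: the monotonicity of $f$ gives $\epsilon\leq f(a_nx+b_n)/f(a_n)\leq 1$ on $x\geq 1$, reducing $I_C$ to a constant multiple of $1-F^n(a_nv+b_n)$, while Theorem \ref{thm_von} forces $L(n)=a_nf(a_n)n\bar F(a_n)/\bar F(a_n)\to \alpha$ (Fr\'echet) or $\to 1$ (Gumbel); sending $v\to\infty$ then makes $1-G^{(k)}(v)\to 0$.

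Case (ii) Weibull I would deduce from case (i) by the reciprocal transformation $Z_i=1/(r(F)-X_i)$: Lemma \ref{rel_1} places $F_Z\in\D(\Phi_\alpha)$, and the Jacobian of the map links $h(g_n^{(k)})$ to $h(\tilde g_n^{(k)})$ up to an explicit correction $2E\bigl(\log(\vee_{i=1}^{n-k+1}Z_i a_n)\,\mathbf 1_{\epsilon_n\leq\cdot}\bigr)$. To evaluate that expectation I would pass to $Y_i=\alpha\log Z_i$, which satisfies $F_Y\in\D(\Lambda)$ by Lemma \ref{Lemma3.1}, verify the moment hypothesis $E(|Y_1|\mathbf 1_{Y_1\leq 0})<\infty$ (via Jensen, using $r(F)<\infty$), apply the moment-convergence Theorem \ref{Moment}(iii) to conclude $E(\vee Y_i+\alpha\log a_n)\to -(-\gamma+\sum_{i=1}^{k-1}1/i)$, and then use the already-proved case (i) for the $Z$-sequence together with the closed-form comparison $h(\psi_\alpha^{(k)}) - h(\phi_\alpha^{(k)}) = \tfrac{2}{\alpha}(-\gamma+\sum_{i=1}^{k-1}1/i)$ read off from Lemma \ref{TPhi_orderEnt}.

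The main obstacle will be the tail $I_A$: constructing an integrable majorant that is valid uniformly in large $n$ all the way down to $l(G)$, and simultaneously controlling the super-polynomial decay of $F^{n-k}(\xi_n)$ against the slow Karamata-type growth of $\log a_n$ so that the $(-\infty,t_n]$ piece contributes nothing. The Gumbel subcase is the most delicate because the auxiliary function $u$ enters the bound quantitatively (through Theorem \ref{lem4_appendix}) and the estimates must split according to the sign of $x$ so that Lemma \ref{G_Lemma6} can be invoked on each side.
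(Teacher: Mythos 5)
Your proposal does not prove the statement it was asked to prove. The statement is the appendix transformation lemma: (i) if $F\in\D(\Psi_\alpha)$ with norming constants $a_n>0,$ $b_n=r(F),$ then $Z_i=1/(r(F)-X_i)$ has df $F_Z\in\D(\Phi_\alpha)$ with $\tilde a_n=1/a_n,$ $\tilde b_n=0;$ and (ii) if $F\in\D(\Phi_\alpha)$ with $a_n=F^{\leftarrow}(1-1/n),$ $b_n=0,$ then $Y_i=\alpha\log X_i$ has df $F_Y\in\D(\Lambda)$ with $a_n^*=1,$ $b_n^*=\alpha\log a_n.$ What you have sketched instead is the proof of the paper's main result, Theorem \ref{TPhi_order}: the $I_1/I_2$ entropy decomposition, the splitting of $I_2$ into $I_A,I_B,I_C,$ the dominated-convergence arguments with Fr\'echet- and Gumbel-shaped majorants, and the reduction of the Weibull case by the reciprocal transformation. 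None of that is relevant to the lemma, which is a purely distributional change-of-norming statement with no entropy content. Worse, in your treatment of the Weibull case you explicitly invoke ``Lemma \ref{rel_1}'' and ``Lemma \ref{Lemma3.1}'' --- that is, the two parts of the very statement under consideration --- as known inputs, so read as a proof of this lemma your argument is circular. (For the record, the paper itself gives no proof either: it imports the lemma from Saeb (2023).)

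The lemma admits a short direct proof, which is what was wanted. For (i): the map $x\mapsto 1/(r(F)-x)$ is strictly increasing from $(-\infty,r(F))$ onto $(0,\infty),$ so for $z>0$ one has $F_Z(z)=P(X\leq r(F)-1/z)=F(r(F)-1/z),$ and hence, writing $x=-1/z<0,$
\begin{eqnarray}
F_Z^n\left(\tilde a_n z+\tilde b_n\right)=F_Z^n\left(z/a_n\right)=F^n\left(a_n x+r(F)\right)\to\Psi_\alpha(x)=\exp\left(-z^{-\alpha}\right)=\Phi_\alpha(z),\nonumber
\end{eqnarray}
while for $z\leq 0$ the left side is $0=\Phi_\alpha(z)$ since $Z_i>0$ a.s.; this is exactly $F_Z\in\D(\Phi_\alpha)$ with $\tilde a_n=1/a_n,$ $\tilde b_n=0.$ For (ii): since $F\in\D(\Phi_\alpha)$ forces $r(F)=\infty,$ only the upper tail matters, and for $y\in\Real,$ $F_Y(y)=P(X\leq e^{y/\alpha})=F(e^{y/\alpha}),$ so
\begin{eqnarray}
F_Y^n\left(a_n^* y+b_n^*\right)=F_Y^n\left(y+\alpha\log a_n\right)=F^n\left(a_n e^{y/\alpha}\right)\to\Phi_\alpha\left(e^{y/\alpha}\right)=\exp\left(-e^{-y}\right)=\Lambda(y),\nonumber
\end{eqnarray}
which is $F_Y\in\D(\Lambda)$ with $a_n^*=1,$ $b_n^*=\alpha\log a_n.$ Both parts are two-line computations exploiting that $\Psi_\alpha(-1/z)=\Phi_\alpha(z)$ and $\Phi_\alpha(e^{y/\alpha})=\Lambda(y);$ no entropy machinery, majorants, or moment-convergence theorems are needed.
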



\begin{lem}\label{Lemma3} The value of the integral
\begin{eqnarray}
A(k)=\int_{0}^{\infty} u^{k-1}e^{-u}\log u du=(k-1)!\left(-\gamma+\sum_{i=1}^{k-1}\dfrac{1}{i}\right), \; k\geq 2,
\end{eqnarray} with $\;A(1) = - \gamma.\;$
\end{lem}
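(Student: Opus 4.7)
The integral $A(k)$ is, up to the factor $\Gamma(k)=(k-1)!$, the value of the digamma function at $k$, since differentiating $\Gamma(s)=\int_0^\infty u^{s-1}e^{-u}\,du$ under the integral sign at $s=k$ gives $\Gamma'(k)=A(k)$. The cleanest proof would therefore invoke $\Gamma'(k)/\Gamma(k)=\psi(k)=-\gamma+\sum_{i=1}^{k-1}1/i$. For a self-contained presentation I would instead argue by induction on $k$.

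The plan is as follows. First I would dispose of the base case $A(1)=\int_0^\infty e^{-u}\log u\,du=-\gamma$; this is the classical integral representation of the Euler--Mascheroni constant and can be derived, e.g., by splitting the integral at $1$ and comparing with $\int_0^1(1-e^{-u})u^{-1}du-\int_1^\infty e^{-u}u^{-1}du$, or quoted directly. Next I would establish the recurrence
\begin{equation}
A(k)=(k-1)\,A(k-1)+(k-2)!,\qquad k\ge 2,\nonumber
\end{equation}
by integration by parts in $A(k)=\int_0^\infty u^{k-1}e^{-u}\log u\,du$, choosing $f(u)=u^{k-1}\log u$ and $dg=e^{-u}du$. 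Then $f'(u)=(k-1)u^{k-2}\log u+u^{k-2}$ and $g(u)=-e^{-u}$, and the boundary term $[-u^{k-1}e^{-u}\log u]_0^\infty$ vanishes for $k\ge 2$ (the $u\to 0$ end because $u^{k-1}\log u\to 0$ for $k\ge 2$, and the $u\to\infty$ end because of the exponential decay). What remains is exactly $(k-1)A(k-1)+\Gamma(k-1)=(k-1)A(k-1)+(k-2)!$.

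Finally, I would close the induction. Assuming $A(k-1)=(k-2)!\bigl(-\gamma+\sum_{i=1}^{k-2}1/i\bigr)$, the recurrence gives
\begin{equation}
A(k)=(k-1)!\Bigl(-\gamma+\sum_{i=1}^{k-2}\tfrac{1}{i}\Bigr)+(k-2)!=(k-1)!\Bigl(-\gamma+\sum_{i=1}^{k-2}\tfrac{1}{i}+\tfrac{1}{k-1}\Bigr)=(k-1)!\Bigl(-\gamma+\sum_{i=1}^{k-1}\tfrac{1}{i}\Bigr),\nonumber
\end{equation}
as required. The only delicate point is the base case $A(1)=-\gamma$, since the induction and the boundary evaluations are routine; I would either quote this as a standard fact or include a short derivation using $\gamma=\lim_{n\to\infty}(\sum_{j=1}^n 1/j-\log n)$ together with the representation $\int_0^\infty e^{-u}\log u\,du=\lim_{n\to\infty}\int_0^n(1-u/n)^n \log u\,du$ and a change of variable.
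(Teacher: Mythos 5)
Your proof is correct and takes essentially the same route as the paper: an induction on $k$ whose inductive step is the integration-by-parts recurrence $A(k)=(k-1)A(k-1)+\Gamma(k-1)$, combined with the classical base case $A(1)=-\gamma$. If anything your write-up is tidier, since the paper arrives at the same recurrence through a lengthy expansion of factorial sums, while your digamma observation is a pleasant aside that plays no role in the argument itself.
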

\begin{proof}
For $k=1,$ we have 
\begin{eqnarray}
A(1)&=&\int_{0}^{\infty} e^{-u}\;\log u \;du=-\gamma.
\end{eqnarray}
For $k=2,$ we have 
\begin{eqnarray}
A(2) &=& \int_{0}^{\infty} u\;e^{-u}\log u\; du \nonumber \\ 
&=&\int_{0}^{\infty}\log u\;e^{-u}\;du+\int_{0}^{\infty}e^{-u}du, \text{using integration by parts,} \nonumber\\
&=&A(1)+\Gamma(1)=1-\gamma.\nonumber
\end{eqnarray}
Assuming the result for arbitrary $k-1,$ we have 
\begin{eqnarray}\label{lemma3_A}
A(k-1)&=& (k-2)!\left(-\gamma+\sum_{i=1}^{k-2}\dfrac{1}{i}\right),\nonumber\\
&=&-\gamma(k-2)!+\dfrac{(k-2)!}{k-2}+(k-2)\dfrac{(k-3)!}{k-3}+[(k-2)(k-3)]\dfrac{(k-4)!}{k-4}\nonumber\\
&&+\ldots+[(k-2)(k-3)(k-4)\ldots 5]\dfrac{4!}{4}+[(k-2)(k-3)\ldots 4]\dfrac{3!}{3}+(k-2)!,\nonumber\\
&=&-\gamma(k-2)!+\Gamma(k-2)+(k-2)\Gamma(k-3)+[(k-2)(k-3)]\Gamma(k-4),\nonumber\\
&&+\ldots+[(k-2)(k-3)(k-4)\ldots 5]\Gamma(4)+[(k-2)(k-3)\ldots 4]\Gamma(3)\nonumber\\
&&+[(k-2)(k-3)\ldots 3]\Gamma(2),\nonumber\\
&=&(k-2)A(k-2)+\Gamma(k-2).
\end{eqnarray}
We then have, 
\begin{eqnarray}
A(k)&=&\int_{0}^{\infty}u^{k-1}\log u e^{-u}\;du, \nonumber \\ 
&=&(k-1)\int_{0}^{\infty}u^{k-2}\log u\;e^{-u}du+\int_{0}^{\infty}u^{k-2}e^{-u}\;du,\nonumber\\
&=&(k-1)A(k-1)+\Gamma(k-1), \nonumber \\
&=&(k-1)!\left(-\gamma+\sum_{i=1}^{k-2}\dfrac{1}{i}\right)+\Gamma(k-1) \;\text{using}\; (\ref{lemma3_A}),\nonumber\\
&=&-\gamma(k-1)!+\dfrac{(k-1)!}{k-1}+(k-1)\dfrac{(k-2)!}{k-2}+[(k-1)(k-2)]\dfrac{(k-3)!}{k-3}\nonumber\\
&&+\ldots+[(k-1)(k-2)(k-3)\ldots 4]\dfrac{3!}{3}+[(k-1)(k-2)\ldots 3]\dfrac{2!}{2}+(k-1)!,\nonumber\\
&=&(k-1)!(-\gamma+\dfrac{1}{k-1}+\dfrac{1}{k-2}+\dfrac{1}{k-3}+\ldots+\dfrac{1}{3}+\dfrac{1}{2}+1),\nonumber\\
&=&(k-1)!\left(-\gamma+\sum_{i=1}^{k-1}\dfrac{1}{i}\right),\;k\geq 2.\nonumber
\end{eqnarray} Hence, by induction, the proof is complete. 
\end{proof}



\begin{thebibliography}{}
	\bibitem{Barron (1986)}{\sc Barron, A.R.,} (1986), {\it Entropy and the Central Limit Theorem,} {\it The Annals of Probability}, Vol. 14, No. 1, Pages 336-342.
 
	\bibitem{Balkema (1972)}{\sc Balkema, A. A. and De Haan, L.,} (1972), {\it On R. von Mises condition for the domain of attraction of $\exp(-e^{-x})$,} {\it The Annals of Mathematical Statistics}, Vol. 43, No. 4, Pages 1352-1354.

\bibitem{Brown (1972)}{\sc Brown, L. D.,} (1982), {\it A proof of the central limit theorem motivated by the Cramer Rao inequality,} {\it Statistics and probability: Essays in honour of C. R. Rao}, Pages 141-148.

\bibitem{Cardone (2022)}{\sc Cardone, M., Dytso, A. and  Rush, C.,} (2022), {\it Entropic CLT for order statistics,} {\it IEEE International Symposium on Information Theory}, DOI: 10.1109/ISIT50566.2022.9834720.

\bibitem{Ebrahimi (1999)}{\sc Ebrahimi, N., Maasoumi, E. and Soofi, E.,} (1999), {\it Ordering univariate distributions by entropy and variance,} {\it J. Econometrics}, Vol. 90, Pages 317-336.

\bibitem{Hall (1978)}{\sc Hall, P.,} (1978), {\it Representations and limit theorems for extreme value distributions,} {\it J. Appl. Probab.,} Vol. 15, No. 3, Pages 639-644.

\bibitem{Hann (1982)}{\sc De Haan, Laurens and Resnick, Sidney I.,} (1982), {\it Local limit theorems for sample extremes,} {\it The Annals of Probability}, Vol. 10, No. 2, Pages 396-413.


\bibitem{Galambos (1987)}{\sc Galambos, J.,} (1987), {\it The Asymptotic Theory of Extreme Order Statistics,} Krieger Pub. Co.




\bibitem{Lazo (1978)}{\sc Lazo, A. C. G. and Rathi, P. N.,} (1978), {\it On the entropy of continuous distributions}, {\it IEEE transactions on Infor. Th.,} Vol. 24, Pages 120-122.

\bibitem{Linnik (1959)}{\sc Linnik, J. V.,} (1959), {\it An information theoretic proof of the central limit theorem with Lindeberg conditions}, {\it Theory Probability Application}, Vol. 4, Pages 288-299.

\bibitem{Ravi (2012)}{\sc Ravi, S. and Saeb, Ali,} (2012), {\it A note on entropies of \textit{l}-max stable, \textit{p}-max stable, generalized Pareto and generalized log-Pareto distributions,} {\it ProbStat Forum,} Vol. 5, Pages 62-79.

\bibitem{Ravi (2014)} Ravi, S. and Saeb, A., (2014a), {\it On convergence of entropy of distribution functions in the max domain of attraction of max stable laws}, (arXiv).

\bibitem{Ravi (2014)} Ravi, S. and Saeb, A., (2014b), {\it On information theory and its applications}, {\it Probstat Forum}, Vol. 7, Pages 45-54.

\bibitem{Resnick (1987)}{\sc Resnick, Sidney I.,} (1987), {\it Extreme Values, Regular Variation, and Point Processes,} Springer Verlag.


\bibitem{Saeb (2018)}{\sc Saeb, Ali,} (2018), {\it On relative R\'{e}nyi entropy convergence in max domain of attraction,} {\it Yokohama Mathematical Journal}, Vol. 64.

\bibitem{Saeb (2023a)}{\sc Saeb, Ali,} (2023a), {\it A comment on rates of convergence for density function in extreme value theory and R\'enyi entropy,} {\it Theory of Probability and Mathematical Statistics}, Vol. 108, Pages 169-173.

\bibitem{Saeb (2023b)}{\sc Saeb, Ali,} (2023b), {\it On convergence of entropy of distribution functions in the max domain of attraction of max stable laws,}
{\it Japanese Journal of Statistics and Data Science}, DOI: s42081-023-00203-4

\bibitem{Shannon (1948)}{\sc Shannon, C. E.,} (1948), {\it A mathematical theory of communications}, {\it Bell system technical journal}, Vol. 27, Pages 379-423.

\bibitem{Shimizu (1975)}{\sc Shimizu, R.,} (1975), {\it On fisher's amount of information for location family}, {\it Statistical distributions in scientific work}, Vol. 3, Pages 305-312.

\bibitem{Takano (1987)}{\sc Takano, Seiji,} (1987), {\it Convergence entropy central limit theorem}, {\it Yokohama Math. J.}, Vol. 35, Pages 143-147.

\end{thebibliography}
\end{document}